\title{Energy dissipation and global convergence of a discrete normalized gradient flow for computing ground states of two-component Bose-Einstein condensates}
\shorttitle{The discrete gradient flow for MBECs}
\author[1]{Zixu Feng}
\author[1]{Lunxu Liu}
\author[1,\cormark]{Qinglin Tang}
\affil[1]{School of Mathematics,
Sichuan University}
\date{Date: \today}
\begin{document}

\maketitle
\thispagestyle{firststyle}

\begin{abstract}
The gradient flow with semi-implicit discretization (GFSI) is the most widely used algorithm for computing the ground state of Gross-Pitaevskii energy functional. Numerous numerical experiments have shown that the energy dissipation holds when calculating the ground states of multicomponent Bose-Einstein condensates (MBECs) with GFSI, while rigorous proof remains an open challenge. By introducing a Lagrange multiplier, we reformulate the GFSI into an equivalent form and thereby prove the energy dissipation for GFSI in two-component scenario with Josephson junction and rotating term, which is one of the most important and topical model in MBECs. Based on this, we further establish the global convergence to stationary states. Also, the numerical results of energy dissipation in practical experiments corroborate our rigorous mathematical proof, and we numerically verified the upper bound of time step that guarantees energy dissipation is indeed related to the strength of particle interactions.

\keywords{two-component BECs; ground states; normalized gradient flow; energy dissipation; global convergence.}
\MSC{AMS Subject Classification: 65N12, 65N25, 81Q05}
\end{abstract}


\section{Introduction}

Bose-Einstein condensates (BECs), a macroscopic quantum phenomenon in which bosons occupy the same quantum ground state at ultra-low temperatures, have become an ideal platform for the study of quantum many-body interactions and nonlinear dynamics since its experimental realization in 1995\cite{ref_Abo,ref_S.K.,ref_And,ref_A.1,ref_A.2}. There is particular interest in creating long-lived multi-component BEC systems, where the condensate wave functions are affected by inter-component interactions\cite{ref_2-BECS}. 

Mathematically, the behavior of two-component BECs can be described by a complex-valued wave function $\Psi:=(\psi_1,\psi_2)^{\top}$, defined on $\mathbb{R}^d$, $d=2,3$. The coupled Gross-Pitaevskii energy functional for two-component BECs with Josephson junction and rotating term is given by 
\begin{equation}\label{original total energy}
\begin{aligned}
E(\Psi) & =\sum_{i=1,2}  \int_{\mathbb{R}^d}\frac{1}{2}|\nabla\psi_i|^2+V_i(\mathbf{x})|\psi_i|^2+\frac{1}{2}(k_{i1}|\psi_1|^2+k_{i2}|\psi_2|^2) |\psi_i|^2 \\
& \qquad - \omega_i\overline{\psi_i}L_z\psi_i \mathrm{d}\mathbf{x}  + 2\beta \int_{\mathbb{R}^d}\mathrm{Re}(\psi_1\overline{\psi_2})\mathrm{d}\mathbf{x}.
\end{aligned}
\end{equation}

Here, $\mathbf{x}\in \mathbb{R}^d$ is the spatial coordinate with $\mathbf{x}=(x,y)^{\top}$ in 2D and $\mathbf{x}=(x,y,z)^{\top}$ in 3D. $V_i$, $i=1,2$ denote the real-valued external potential,  $\beta$ describes the strength of internal atomic Josephson junction and density functions $|\psi_i|^2$, $i=1,2$ are coupled by the linear combinations:
\begin{equation}\label{density}
    \rho_1(\Psi)=k_{11}|\psi_1|^2+k_{12}|\psi_2|^2, \quad \rho_2(\Psi)=k_{21}|\psi_1|^2+k_{22}|\psi_2|^2,
\end{equation}
where $k_{ij}$ represents the strength of particle interactions between the $i$-th and $j$-th condensate components ($k_{ij}>0$ for the repulsive case, $k_{ij}<0$ for the attractive case). The interaction matrix $(k_{ij})_{2 \times 2}$ is symmetric, i.e., $k_{12}=k_{21}$. $L_z$ is the angular momentum operator defined as 
\[ L_z:=-i(x\partial y - y \partial x),\]
and $\omega:=(\omega_1,\omega_2)^{\top}$ denoting the non-negative rotation frequency.

The components of the wave function $\Psi$ are square integrable with respect to spatial coordinates, whose the squared of their $L^2$-norms corresponds to the total mass of each component. This total mass is conserved, satisfying the constraint
\begin{equation}
M(\Psi):=\|\psi_1\|^2_{L^2}+\|\psi_2\|^2_{L^2} \equiv 1.
\end{equation} 
If there is no internal Josephson junction, i.e. $\beta = 0$, then the mass of each component is also conserved\cite{ref_bao_cai}, i.e. 
$$ \|\psi_1\|^2_{L^2} = M_1, \quad \|\psi_2\|^2_{L^2} = 1-M_1, $$
with $0 \leq M_1 \leq 1$ a given constant.

A central problem in the study of BECs is the computation of the ground state, defined as a wave function $\Psi_g(\mathbf{x})$ that minimizes the functional $E(\Psi)$ under the mass constraint:
\begin{equation}\label{original mini problem}
\Psi_g (\mathbf{x}) := \arg\min_{\Psi \in \mathcal{M}} E(\Psi) ,
\end{equation}
where $\mathcal{M} := \left\{ \Psi \in [L^2(\mathbb{R}^d)]^2 \mid \|\psi_1\|^2_{L^2}+\|\psi_2\|^2_{L^2}=1, E(\Psi) < \infty \right\}$.
Also we have the Euler-Lagrange equation associated with the above minimization problem reads as 
\[\begin{aligned}
     & \mu\psi_1 = \left(  -\frac{1}{2}\nabla^2 +V_1(\mathbf{x})+\rho_1(\Psi)-\omega_1 L_z \right)\psi_1+ \beta \psi_2, \quad \mathbf{x} \in \mathbb{R}^{d}, \\
     & \mu\psi_2 = \left(  -\frac{1}{2}\nabla^2 +V_2(\mathbf{x})+\rho_2(\Psi)-\omega_2 L_z \right)\psi_2 + \beta \psi_1, \quad \mathbf{x} \in \mathbb{R}^{d},
\end{aligned}
\]
under the mass constraint $\Psi \in \mathcal{M}$. This is a nonlinear eigenvalue problem for $(\mu,\Psi)$, the corresponding eigenvalue $\mu$ can be computed through the eigenfunction $\Psi$ by
\[ \begin{aligned}
 \mu(\Psi) & =\sum_{i=1,2}  \int_{\mathbb{R}^d}\frac{1}{2}|\nabla\psi_i|^2+V_i(\mathbf{x})|\psi_i|^2+\rho_i(\Psi)|\psi_i|^2 - \omega_i\overline{\psi_i}L_z\psi_i \mathrm{d}\mathbf{x}   \\
 & \qquad + 2\beta \int_{\mathbb{R}^d}\mathrm{Re}(\psi_1\overline{\psi_2})\mathrm{d}\mathbf{x} \\
        &  = E(\Psi)+\frac{1}{2}\int_{\mathbb{R}^d} \rho_1(\Psi)|\psi_1|^2+\rho_2(\Psi)|\psi_2|^2 \mathrm{d}\mathbf{x}. 
    \end{aligned}
\] 

To compute the ground state of BEC no matter in the single-component or two-component case, various numerical methods have been proposed, based on either energy minimization or eigenvalue characterization, including Sobolev gradient methods, Riemannian optimization techniques, and normalized gradient flows\cite{Alt,ref_Antoine,ref_Q.tang,ref_I.D.,ref_W.Liu.}.
Among these, the most popular and widely influential algorithm framework especially in physics literature is gradient flow with various realization. And one of the most important methods is the gradient flow with discrete normalization (GFDN), sometimes also called discrete normalized gradient flow (DNGF). The discrete normalized gradient flow with semi-implicit discretization (GFSI) has emerged as a practical approach due to its inherent energy dissipation and mass conservation properties. 

We review the following continuous normalized gradient flow (CNGF) by introducing an artificial time variable $t$:
\begin{equation}\label{CNGF}
    \begin{aligned}
    & \partial_t \psi_1(\mathbf{x},t) = \left(  \frac{1}{2}\nabla^2 -V_1(\mathbf{x})-\rho_1(\Psi)+\omega_1 L_z+ \mu_{\Psi}(t) \right)\psi_1(\mathbf{x},t) - \beta \psi_2(\mathbf{x},t), \\
    & \partial_t \psi_2(\mathbf{x},t) = \left(  \frac{1}{2}\nabla^2 -V_2(\mathbf{x})-\rho_2(\Psi)+\omega_2 L_z+ \mu_{\Psi}(t) \right)\psi_2(\mathbf{x},t) - \beta \psi_1(\mathbf{x},t), \\
    & \Psi(\mathbf{x},0) = \Psi_0(\mathbf{x}) \in \mathcal{M}, \quad \mathbf{x} \in \mathbb{R}^d, \quad t\geq 0,
\end{aligned} 
\end{equation}
where $\mu_{\Psi}(t) := \mu(\Psi(\cdot,t))$. As we know, the solution $\Psi$ to this CNGF is normalization conserved. The GFSI algorithm mentioned above is obtained by applying the time-splitting method and a semi-implicit discretization to the CNGF.

Theoretical studies of the GFSI have made significant progress in the simpler single-component case. Specifically, early work in Ref.~\cite{ref_DNGF} established energy decay for linear systems, while subsequent studies such as Refs.~\cite{ref_Faou} and \cite{ref_Henning} derived local convergence results for certain nonlinear regimes. More recently, Ref.~\cite{ref_Zixu} provided a proof of energy dissipation under general conditions.

In contrast, the situation becomes significantly more complicated for multicomponent BECs, which model systems composed of multiple interacting condensates. The inter-component coupling introduces additional computational and theoretical challenges, dramatically altering the energy landscape and requiring novel analytical tools. Although various numerical methods have been proposed, including gradient flows\cite{ref_multi,ref_bao_cai}, Newton-type algorithms \cite{ref_Cal,ref_Huang}, alternating minimization \cite{ref_Huang}, Riemannian optimization methods \cite{ref_ROM}, and others \cite{ref_Kaz,ref-Kre,ref_Lin,ref_mish,ref_sch}, rigorous theoretical analysis remains limited compared to the single-component case. Existing studies often rely on numerical heuristics or offer only partial theoretical guarantees. In this work, we take a first step toward addressing this gap by rigorously proving the energy dissipation and global convergence of the GFSI method for two-component BECs with Josephson junction coupling.

The paper is organised as follows. Section 2 introduce some basic notations and the mathematical formulation of the GFSI algorithm. In section 3, we review the famous GFSI to Gross-Pitaevskii energy functional and provide a rigorous proof of energy dissaption. In section 4, we provide the rigorous proof of the global convergence. Section 5 is devoted to numerical tests to validate the theoretical findings. Finally, some conclusions are drawn in Section 6.

\section{The constrained energy minimization and basic properties}

In this section, we introduce some basic notations, the mathematical formulation and the basic properties of these mathematical  formulations.

It is well known that if the trapping potential satisfies the confining condition, the ground states decay exponentially fast when $|\mathbf{x}| \to \infty$. So we can truncate the whole space $\mathbb{R}^d$ into a bounded domain $\Omega$ with homogeneous Dirichlet boundary conditions and assume that the boundary is Lipschitz continuous. Practically, throughout this paper we always make assumptions as follows.
\begin{enumerate}
    \item \textbf{A1}: The external potentials satisfy \( V_i \in   L^\infty(\Omega) \) and 
     $$ V_i(\mathbf{x}) \geq \frac{1+\alpha}{2} \omega^2_i(x^2+y^2) + |\beta|, \quad \text{for almost all}\ \mathbf{x} \in \Omega, $$
        with $d \geq 2$ where $\alpha > 0$ is certain constant, $i=1, 2$. Under this condition, $V_i$ naturally satiesfy the confining condition.
    \item \textbf{A2}: The interaction matrix $K =: \begin{pmatrix}
k_{11} & k_{12} \\
k_{21} & k_{22}
\end{pmatrix} $ is always symmetric and either positive definite or every entries all non-negative.
\end{enumerate}

\subsection{Notations}

On the bounded domain $\Omega$, we equip the the Hilbert spaces $L^2(\Omega)$ and $H^1_0(\Omega)$ with the following real inner products:
\[ 
(u,v)_{L^2}:=\mathrm{Re}\int_\Omega u \overline{v}\mathrm{d}\mathbf{x} \quad\mathrm{and}\quad
(u,v)_{H_0^1}:=\mathrm{Re}\int_\Omega\nabla u\cdot\nabla\overline{v}\mathrm{d}\mathbf{x}.
\]
which also naturally induce the corresponding norms 
\[
\|\cdot\|_{L^2}^2=(\cdot,\cdot)_{L^2} \quad\mathrm{and}\quad 
\|\cdot\|_{H_0^1}^2=(\cdot,\cdot)_{H_0^1}.
\]
Throughout this paper, our discussions are totally based on vector form, hence for any $\mathbf{u}=(u_1, u_2)^{\top}, \mathbf{v}=(v_1,v_2)^{\top} \in [H^1_0(\Omega)]^2$, which is also in $[L^2(\Omega)]^2$, we define the corresponding bilinear form as:
\[
(\mathbf{u},\mathbf{v})_{L^2} := \sum_{i=1,2} (u_i, v_i)_{L^2} ,\quad 
(\mathbf{u},\mathbf{v})_{H_0^1} := \sum_{i=1,2} (u_i, v_i)_{H_0^1}, 
\]
It's standard to verify the definitions above are actually inner products with respect to Lebesgue space $[L^2(\Omega)]^2$ and Sobolev space $[H^1_0(\Omega)]^2$. The corresponding norms are given by
\[
   \|\mathbf{u}\|^2_{L^2} = (\mathbf{u},\mathbf{u})_{L^2}, \quad
     \|\mathbf{u}\|^2_{H^1_0} = (\mathbf{u},\mathbf{u})_{H^1_0}.
\]
In addition, in order to simplify writing in the following, we also use such notation 
$$\|u\|_p= \|u\|_{L^p(\Omega)}, \quad \|\mathbf{u}\|_p=\|\mathbf{u}\|_{[L^p]^2}= \left( \|u_1\|^p_p + \|u_2\|^p_p \right)^{1/p} ,$$ 
 for scalar form $u$ and $u_i$ in $L^p$ space with no ambiguity.
After truncation and notations given above, we can rewrite the energy functional as:
\begin{equation}\label{total energy}
\begin{aligned}
E(\Psi) =\sum_{i=1,2} & \int_{\Omega}\frac{1}{2}|\nabla\psi_i|^2+V_i(\mathbf{x})|\psi_i|^2+\frac{1}{2}\rho_i(\Psi)|\psi_i|^2 - \omega_i\overline{\psi_i}L_z\psi_i \mathrm{d}\mathbf{x} \\
& + 2\beta \int_{\Omega}\mathrm{Re}(\psi_1\overline{\psi_2})\mathrm{d}\mathbf{x}.
\end{aligned}
\end{equation}
and ground states (\ref{original mini problem}):
\begin{equation}\label{mini problem}
\Psi_g (\mathbf{x}) := \arg\min_{\Psi \in \mathcal{M}} E(\Psi),
\end{equation}
where $\mathcal{M} := \left\{ \Psi \in [H^1_0(\Omega)]^2 \mid \|\Psi\|_{L^2} =1, E(\Psi) < \infty \right\} $.
Assumptions about the external potentials \textbf{A1} and the interaction matrix \textbf{A2} ensure the existence of a ground state. 
\begin{lemma}\label{existence of ground state}
     There exists $\Psi_g \in \mathcal{M}$, such that $\Psi_g$ is a global minimizer of the constrained minimization problem (\ref{mini problem}).
\end{lemma}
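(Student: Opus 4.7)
The plan is to apply the direct method of the calculus of variations: establish coercivity of $E$ on $\mathcal{M}$, extract a weakly convergent subsequence from a minimizing sequence, and pass to the limit via weak lower semicontinuity.

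First I would prove a coercivity estimate of the form $E(\Psi)\geq c\|\Psi\|_{H_0^1}^2$ on $\mathcal{M}$. The non-negativity of the interaction contribution $\sum_i\int_\Omega\rho_i(\Psi)|\psi_i|^2\dif\mathbf{x}=k_{11}\|\psi_1\|_4^4+2k_{12}\int|\psi_1|^2|\psi_2|^2\dif\mathbf{x}+k_{22}\|\psi_2\|_4^4$ follows from A2, since it is the quadratic form of $K$ applied to $(|\psi_1|^2,|\psi_2|^2)^\top$, which is non-negative whether $K$ is positive definite or entrywise non-negative. The rotation term is controlled via $|\overline{\psi_i}L_z\psi_i|\leq(x^2+y^2)^{1/2}|\psi_i|\,|\nabla\psi_i|$ and Young's inequality with weight $\varepsilon=1/[\omega_i(1+\alpha)]$, giving $\omega_i\int|\overline{\psi_i}L_z\psi_i|\dif\mathbf{x}\leq\tfrac{1}{2(1+\alpha)}\|\nabla\psi_i\|_2^2+\tfrac{(1+\alpha)\omega_i^2}{2}\int(x^2+y^2)|\psi_i|^2\dif\mathbf{x}$. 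The harmonic-trap piece is absorbed by $V_i$ through A1, and the Josephson term satisfies $2|\beta|\int|\psi_1||\psi_2|\dif\mathbf{x}\leq|\beta|\|\Psi\|_{L^2}^2=|\beta|$, which is absorbed by the constant $|\beta|$ shift in A1. What remains is $\frac{\alpha}{2(1+\alpha)}\|\Psi\|_{H_0^1}^2\leq E(\Psi)$, so $E$ is bounded below on $\mathcal{M}$ and any minimizing sequence is bounded in $[H_0^1(\Omega)]^2$.

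Second, I would pick $\{\Psi_n\}\subset\mathcal{M}$ with $E(\Psi_n)\to\inf_{\mathcal{M}}E$ and, up to a subsequence, assume $\Psi_n\rightharpoonup\Psi_g$ weakly in $[H_0^1(\Omega)]^2$. The Rellich--Kondrachov theorem on the bounded Lipschitz domain $\Omega\subset\mathbb{R}^d$, $d\in\{2,3\}$, then gives $\Psi_n\to\Psi_g$ strongly in $[L^p(\Omega)]^2$ for $p=2,4$. Strong $L^2$-convergence preserves the mass constraint, so $\Psi_g\in\mathcal{M}$. The potential term is continuous along the sequence because $V_i\in L^\infty$; the Josephson term is $L^2$-continuous; the quartic interaction term is continuous under strong $L^4$-convergence; and the rotation term is continuous because $x\psi_{i,n}$, $y\psi_{i,n}$ converge strongly in $L^2$ while $\nabla\psi_{i,n}$ converges weakly in $L^2$, so the pairing passes to the limit. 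The kinetic contribution $\sum_i\tfrac12\|\nabla\psi_i\|_2^2$ is weakly lower semicontinuous. Combining, $E(\Psi_g)\leq\liminf_n E(\Psi_n)=\inf_{\mathcal{M}}E$, so $\Psi_g$ is a global minimizer.

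The main obstacle is the coercivity step, since the rotation term is first-order in $\nabla\psi_i$ and of indefinite sign while the interaction could a priori be attractive. Assumption A2 eliminates the latter threat, and the margin $\alpha>0$ in A1 is precisely what allows Young's inequality to be tuned so that the trapping potential dominates the $(x^2+y^2)$-weighted term coming from $L_z$, leaving a strictly positive residual of the Dirichlet energy; without this margin one would only get $L^2$-boundedness against the harmonic trap, and $H_0^1$-boundedness of a minimizing sequence would fail. Everything else---the Rellich embedding, continuity of the lower-order and quartic terms under strong $L^p$ convergence, and weak l.s.c.\ of the Dirichlet integral---is standard once the coercive lower bound is in hand.
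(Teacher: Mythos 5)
Your proposal is correct and follows essentially the same route as the paper: the direct method, with coercivity of $E$ on $\mathcal{M}$ obtained from \textbf{A1}, \textbf{A2} and a Young-inequality absorption of the rotation and Josephson terms (which is exactly the coercivity bound \eqref{H0} of Lemma~\ref{some props} that the paper invokes), followed by extraction of a weakly convergent minimizing subsequence, Rellich compactness to preserve the mass constraint, and weak lower semicontinuity of $E$. Your term-by-term limit passage is simply a more detailed spelling-out of the weak l.s.c.\ step that the paper states without proof.
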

\begin{proof}
    The proof is standard and can be found in \ref{proof of EXT}.
\end{proof}

We define the Hamiltonian operator $\mathcal{H}_{\psi_i}: H^1_0(\Omega) \to H^{-1}$ for $\psi_i \in H^1_0(\Omega)$, $u, v \in H^1_0(\Omega)$, $i=1,2$ as:
\begin{equation}
\begin{aligned}
   \langle \mathcal{H}_{\psi_i}u,v \rangle
    &= \mathrm{Re}\int_{\Omega}\frac{1}{2}\nabla u \cdot \nabla \overline{v} +V(\mathbf{x})u\overline{v} + \rho_i(\Psi)u\overline{v} -\omega_i\overline{v}L_zu \mathrm{d}\mathbf{x} \\
    &= \frac{1}{2}(u,v)_{H^1_0} + \left((V(\mathbf{x})+\rho_i(\Psi) -\omega_iL_z)u,v\right)_{L^2},
\end{aligned}
\end{equation}
where $H^{-1}=(H^1_0(\Omega))^*$ denotes the corresponding dual space with canonical duality pairing $\langle \cdot,\cdot \rangle$. Then based on $\mathcal{H}_{\psi_i}$, a linear bounded operator $\mathcal{H}_{\Psi}: [H^1_0(\Omega)]^2 \to H^{*}$, where $H^{*}$ is the dual space of $[H^1_0(\Omega)]^2$, can be given by additive structure:
\begin{equation}\label{additive structure}
 \langle \mathcal{H}_{\Psi}\mathbf{u},\mathbf{v} \rangle :=\sum_{i=1,2} \langle \mathcal{H}_{\psi_i}u_i,v_i \rangle  + \beta(u_1,v_2)_{L^2} + \beta(u_2,v_1)_{L^2},
\end{equation}
for $\mathbf{u},\mathbf{v} \in [H^1_0(\Omega)]^2$. Note that $\Delta u$ can be viewed as $-(\nabla u)\cdot \nabla $ when boundary value equals to zero (in $H_0^1(\Omega)$) in the weak sense, we rewrite the Hamiltonian operator $\mathcal{H}_{\psi_i}$ as 
\[
    \mathcal{H}_{\psi_i}=-\frac{1}{2}\Delta + V_i(\mathbf{x}) + \rho_i(\Psi) - \omega_iL_z, 
\]
which also implies that the operator $\mathcal{H}_{\Psi}$ acts like a matrix:
\begin{equation}\label{p-frame operator}
    \mathcal{H}_{\Psi}\mathbf{u} =
\begin{pmatrix}
\mathcal{H}_{\psi_1} & \beta \\
\beta & \mathcal{H}_{\psi_2}
\end{pmatrix} \mathbf{u} = 
\begin{pmatrix}
    \mathcal{H}_{\psi_1}u_1 + \beta u_2 \\
    \mathcal{H}_{\psi_2}u_2 + \beta u_1
\end{pmatrix}.
\end{equation}
Moreover, we introduce a bilinear form $\mathcal{A}_{\Psi}^a(\cdot,\cdot): [H^1_0(\Omega)]^2\times [H^1_0(\Omega)]^2 \to \mathbb{R}$ according to operator $\mathcal{H}_{\Psi}$ ($\Psi \in [H^1_0(\Omega)]^2$ and $a>0$) as:
\begin{equation}\label{bilinear form}
    \mathcal{A}_{\Psi}^a(\mathbf{u},\mathbf{v}) = 
 \langle (I+a\mathcal{H}_{\Psi})\mathbf{u},\mathbf{v} \rangle= (\mathbf{u},\mathbf{v})_{L^2} + a \langle \mathcal{H}_{\Psi}\mathbf{u},\mathbf{v} \rangle.
\end{equation}
Hereinafter, we denote $C$ a generic constant that may depend on $\Omega$, $d$, $\alpha$, and $\|V\|_{L^{\infty}}$, but is independent of $k_m:=\max\limits_{i,j=1,2}\left\{|k_{ij}|\right\}$. This includes constants arising from Sobolev inequalities.

\subsection[Properties of H\_Psi and A\_Psi\_a(.,.)]{%
  Properties of $\mathcal{H}_{\Psi}$ and $\mathcal{A}_{\Psi}^a(\cdot,\cdot)$%
}

Throughout this paper, we will frequently use the operator $\mathcal{H}_{\Psi}$ and the bilinear form $\mathcal{A}_{\Psi}^a(\cdot,\cdot)$, hence we have the follwing Lemma associated with properties of $\mathcal{H}_{\Psi}$ and $\mathcal{A}_{\Psi}^a(\cdot,\cdot)$. Before this, we present an inequality that will be frequently used throughout this paper:
\begin{equation}\label{G-N inequality}
	\|\mathbf{u}\|_{4} \leq C\|\mathbf{u}\|_{L^2}^{1-\frac{d}{4}} \|\nabla \mathbf{u}\|_{L^2}^{\frac{d}{4}} = C\|\mathbf{u}\|_{L^2}^{1-\frac{d}{4}} \|\mathbf{u}\|_{H^1_0}^{\frac{d}{4}}.
\end{equation}

\begin{lemma}\label{some props}
    Suppose  \textbf{A1} and \textbf{A2} are satisfied, for  given $\Psi \in \mathcal{M}$ and any $\mathbf{u},\mathbf{v} \in [H^1_0(\Omega)]^2$, we have the following properties:
   \begin{enumerate}
	\item The linear part $\mathcal{H}_{\mathbf{0}}$ of $\mathcal{H}_\Psi$ satisfies coercivity, i.e.,
	\begin{equation}\label{H0}
     \langle \mathcal{H}_{\mathbf{0}}\mathbf{u},\mathbf{u} \rangle \geq C_0\|\mathbf{u}\|_{H^1_0}^2,
	\end{equation}
	where $C_0 = \frac{\alpha}{2(1+\alpha)}$.
	\item The operator $\mathcal{H}_\Psi$ satisfies continuity, i.e.,
	\begin{equation}\label{bound of operator}
   |\langle \mathcal{H}_{\Psi}\mathbf{u},\mathbf{v} \rangle| \leq C_{\Psi}\|\mathbf{u}\|_{H^1_0} \|\mathbf{v}\|_{H^1_0},
	\end{equation}
	where $C_{\Psi}=Ck_m\|\Psi\|^{\frac{d}{2}}_{H^1_0}$.
	\item The following lower bound estimate holds
	\begin{equation}\label{operator boundede below}
	\langle \mathcal{H}_{\Psi}\mathbf{u},\mathbf{u} \rangle \geq \frac{C_0}{2} \|\mathbf{u}\|^2_{H^1_0} - C^{\frac{4}{4-d}}_{\Psi}\|\mathbf{u}\|^2_{L^2}.
	\end{equation}
\end{enumerate}
\end{lemma}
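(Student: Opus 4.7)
The three claims share three basic tools: a weighted Young's inequality tuned to the harmonic lower bound in \textbf{A1} for part (1), Hölder combined with the Gagliardo--Nirenberg inequality (\ref{G-N inequality}) for part (2), and an additional Young step splitting a mixed $H^1_0$--$L^2$ estimate for part (3). I want to exploit the fact that \textbf{A1} is stated with the specific coefficient $(1+\alpha)/2$ in front of the harmonic weight and an extra $|\beta|$ summand, since these two features respectively compensate the angular momentum and the Josephson cross term when proving coercivity of $\mathcal{H}_{\mathbf{0}}$.

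For part (1), I will expand $\langle \mathcal{H}_{\mathbf{0}}\mathbf{u},\mathbf{u}\rangle$ into kinetic, potential, angular and coupling pieces. Using $|L_z u_i| \le \sqrt{x^2+y^2}\,|\nabla u_i|$ and Young's inequality $ab \le \epsilon a^2 + \tfrac{1}{4\epsilon}b^2$ with $\epsilon = \tfrac{1}{2(1+\alpha)}$, each angular term is bounded by
\[
\omega_i\!\int\!\sqrt{x^2+y^2}\,|u_i||\nabla u_i|\dif\mathbf{x} \le \frac{1}{2(1+\alpha)}\|\nabla u_i\|_{L^2}^2 + \frac{1+\alpha}{2}\omega_i^2\!\int(x^2+y^2)|u_i|^2\dif\mathbf{x}.
\]
The weighted term is cancelled exactly by the harmonic lower bound on $V_i$ from \textbf{A1}, leaving a $|\beta|\|u_i\|_{L^2}^2$ surplus after summation over $i$. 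Cauchy--Schwarz on $2\beta(u_1,u_2)_{L^2}$ gives a bound of $|\beta|\|\mathbf{u}\|_{L^2}^2$, which is absorbed exactly by that surplus, and the leftover kinetic fraction yields the advertised $C_0 = \alpha/2(1+\alpha)$.

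For part (2), I will estimate each linear contribution (kinetic, bounded potential, angular momentum--since $\sqrt{x^2+y^2}$ is bounded on $\Omega$--and Josephson coupling) routinely by Cauchy--Schwarz and Poincaré. The cubic density term is treated by Hölder,
\[
|(\rho_i(\Psi)u_i,v_i)_{L^2}| \le k_m\|\Psi\|_{4}^{2}\,\|u_i\|_{4}\,\|v_i\|_{4},
\]
and then (\ref{G-N inequality}) combined with $\|\Psi\|_{L^2}=1$ produces $\|\Psi\|_{4}^{2} \le C\|\Psi\|_{H^1_0}^{d/2}$, while Poincaré applied inside (\ref{G-N inequality}) yields $\|u_i\|_{4},\|v_i\|_{4} \le C\|u_i\|_{H^1_0},C\|v_i\|_{H^1_0}$. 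This gives the announced bound $C_\Psi = Ck_m\|\Psi\|_{H^1_0}^{d/2}$, with the lower-order linear contributions absorbed into the same generic constant.

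For part (3), I will decompose $\mathcal{H}_\Psi = \mathcal{H}_{\mathbf{0}} + \mathcal{N}_\Psi$ where $\mathcal{N}_\Psi$ carries only the density nonlinearity, and apply part (1) to the first summand. For $\mathcal{N}_\Psi$ I keep the sharper form of (\ref{G-N inequality}), namely $\|u_i\|_{4}^{2} \le C\|u_i\|_{L^2}^{2-d/2}\|u_i\|_{H^1_0}^{d/2}$, so that
\[
|\langle \mathcal{N}_\Psi\mathbf{u},\mathbf{u}\rangle| \le Ck_m\|\Psi\|_{H^1_0}^{d/2}\,\|\mathbf{u}\|_{L^2}^{2-d/2}\,\|\mathbf{u}\|_{H^1_0}^{d/2}.
\]
Applying Young's inequality with conjugate exponents $p = 4/d$ and $q = 4/(4-d)$ (both admissible since $d\in\{2,3\}$) to the product $\|\mathbf{u}\|_{H^1_0}^{d/2}\cdot(C_\Psi\|\mathbf{u}\|_{L^2}^{2-d/2})$, and choosing the small parameter so the $\|\mathbf{u}\|_{H^1_0}^{2}$ piece carries weight $C_0/2$, the remaining $L^2$ part acquires precisely the exponent $4/(4-d)$ on $C_\Psi$, matching the statement. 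I expect the only real obstacle to be bookkeeping: engineering the sharp constants--$C_0 = \alpha/2(1+\alpha)$ in (1) and the exponent $4/(4-d)$ on $C_\Psi$ in (3)--requires coordinated choices of the two Young parameters with the coefficient of the harmonic weight in \textbf{A1} and with the Gagliardo--Nirenberg exponents, whereas the rest of the argument reduces to standard Hölder, Poincaré and Gagliardo--Nirenberg manipulation.
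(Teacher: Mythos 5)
Your proposal is correct and follows essentially the same route as the paper: coercivity of $\mathcal{H}_{\mathbf{0}}$ via the weighted Young step that pairs the angular momentum term with the $\frac{1+\alpha}{2}\omega_i^2(x^2+y^2)$ part of \textbf{A1} and absorbs the Josephson coupling into the $|\beta|$ surplus, and the lower bound (3) via the same Hölder/Gagliardo--Nirenberg estimate on $\int|\rho_i(\Psi)||u_i|^2$ followed by Young with exponents $4/d$ and $4/(4-d)$ tuned so the $H^1_0$ term carries $C_0/2$. For part (2) the paper simply cites an earlier reference, and your spelled-out Hölder--Gagliardo--Nirenberg argument is the standard proof it has in mind.
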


\begin{proof}
    Firstly, by Young's inequality, for any $\epsilon > 0$, $i=1,2$ we have \[
    \begin{aligned}
        |(\omega_i L_z u_i,u_i)_{L^2}| 
        & \leq \int_{\Omega}|\omega_i||u_i||(y\partial_x-x\partial_y)u_i| \mathrm{d}\mathbf{x} \\
        & \leq \int_{\Omega} \left[ \frac{\epsilon \omega_i^2}{2}(x^2+y^2)|u_i|^2 + \frac{1}{2\epsilon}(|\partial_xu_i|^2 + |\partial_yu_i|^2) \right] \mathrm{d}\mathbf{x} \\
        & \leq \frac{\epsilon \omega_i^2}{2} \int_{\Omega}  (x^2+y^2)|u_i|^2 \mathrm{d}\mathbf{x} + \frac{1}{2\epsilon} \|u_i\|_{H_0^1}^2 \\
        & = \frac{1+\alpha}{2}\omega_i^2 \int_{\Omega}  (x^2+y^2)|u_i|^2 \mathrm{d}\mathbf{x} + \frac{1}{2(1+\alpha)} \|u_i\|_{H_0^1}^2,
    \end{aligned}
    \]
    in which $\epsilon = 1+\alpha$ is taken in the above. And according to the assumption of external potential, we obtain that:
    \[
        (V_i(\mathbf{x})u_i,u_i)_{L^2} = \int_{\Omega}V_i(\mathbf{x})|u_i|^2 \mathrm{d}\mathbf{x} \geq \frac{1+\alpha}{2}\omega_i^2 \int_{\Omega}  (x^2+y^2)|u_i|^2 \mathrm{d}\mathbf{x} + |\beta| \|u_i\|_{H_0^1}^2,
    \]
which implies
\[
\begin{aligned}
  \sum_{i=1,2} (V_i(\mathbf{x})u_i,u_i)_{L^2} - (\omega_i L_z u_i,u_i)_{L^2} & \geq -\frac{\|u_1\|^2_{H_0^1}+\|u_2\|^2_{H_0^1}}{2(1+\alpha)} + |\beta|(\|u_1\|^2_{L^2}+\|u_2\|^2_{L^2}) \\
  & = -\frac{1}{2(1+\alpha)} \|\mathbf{u}\|_{H^1_0}^2 + |\beta|\|\mathbf{u}\|^2_{L^2} \\
  & \geq -\frac{1}{2(1+\alpha)} \|\mathbf{u}\|_{H^1_0}^2 - 2\beta(u_1,u_2)_{L^2}.
\end{aligned}
\]
Hence we obtain that:
\[
\begin{aligned}
     \langle \mathcal{H}_{\mathbf{0}}\mathbf{u},\mathbf{u} \rangle & = \sum_{i=1,2} \left[ \frac{1}{2}\|u_i\|^2_{H_0^1} + (V_i(\mathbf{x})u_i,u_i)_{L^2} - (\omega_i L_z u_i,u_i)_{L^2} \right] + 2\beta(u_1,u_2)_{L^2}\\
     & \geq \frac{1}{2}\|\mathbf{u}\|^2_{H^1_0} - \frac{1}{2(1+\alpha)}\|\mathbf{u}\|^2_{H^1_0} = C_0\|\mathbf{u}\|_{H^1_0}^2.
\end{aligned}
\]
Then (\ref{H0}) has been proved, as for the proof of (\ref{bound of operator}), it's standard and can be directly verified as the same way in Ref.~\cite{ref_Zixu}. 

For \eqref{operator boundede below}, noticing that
\[  \begin{aligned}
	\int_{\Omega}|\rho_i(\Psi)||u_i|^2\mathrm{d}\mathbf{x} 
	& \leq \sum_{j=1,2} \int_{\Omega}|k_{ij}||\psi_j|^2|u_i|^2\mathrm{d}\mathbf{x}\leq \sum_{j=1,2}|k_{ij}|\|\psi_j\|_4^2 \|u_i\|_4^2 \\
	& \leq 
	k_m \|\Psi\|_4^2 \|u_i\|_4^2\leq Ck_m\|\Psi\|_{H^1_0}^{\frac{d}{2}}\|u_i\|_2^{2-\frac{d}{2}}\|u_i\|_{H^1_0}^{\frac{d}{2}}.
\end{aligned}  \]
Combined with Young's inequality, we have 
\begin{align*}
	\sum_{i=1,2}&\int_{\Omega}|\rho_i(\Psi)||u_i|^2\mathrm{d}\mathbf{x}\leq Ck_m\|\Psi\|_{H^1_0}^{\frac{d}{2}}\|\mathbf{u}\|^{2-\frac{d}{2}}_{L^2}\|\mathbf{u}\|^{\frac{d}{2}}_{H^1_0}\\& \leq \epsilon^{-\frac{d}{4-d}}Ck_m\|\Psi\|_{H^1_0}^{\frac{d}{2}} \|\mathbf{u}\|^2_{L^2} + \epsilon Ck_m\|\Psi\|_{H^1_0}^{\frac{d}{2}}\|\mathbf{u}\|^2_{H^1_0}\\
	&=C\left(k^2_m\|\Psi\|_{H^1_0}^{d}\right)^{\frac{2}{4-d}}\|\mathbf{u}\|^2_{L^2}+\frac{C_0}{2}\|\mathbf{u}\|^2_{H^1_0},\quad \text{when}\quad \epsilon=\frac{C_0}{2Ck_m\|\Psi\|_{H^1_0}^{\frac{d}{2}}}.
\end{align*}
This means that:
\begin{align*}
	\langle \mathcal{H}_{\Psi}\mathbf{u},\mathbf{u} \rangle\geq	\langle \mathcal{H}_{\bm{0}}\mathbf{u},\mathbf{u} \rangle-\sum_{i=1,2}\int_{\Omega}|\rho_i(\Psi)||u_i|^2\mathrm{d}\mathbf{x}  \geq \frac{C_0}{2} \|\mathbf{u}\|^2_{H^1_0} -C^{\frac{4}{4-d}}_{\Psi}\|\mathbf{u}\|^2_{L^2}.
\end{align*}
\end{proof}

In addition, concerned with $\mathcal{A}_{\Psi}^a(\cdot,\cdot)$ we have properties as follows.
\begin{lemma}\label{Property-A}
    Suppose \textbf{A1} and \textbf{A2} are satisfied, for given $\Psi \in \mathcal{M}$, the bilinear form $\mathcal{A}_{\Psi}^a(\cdot,\cdot)$ defined in (\ref{bilinear form}) satisfies the following properties:
    \begin{enumerate}
    	\item $\mathcal{A}_{\Psi}^a(\cdot,\cdot)$ is symmetric and bounded.
     
    	\item For $0<a \le a_{\Psi}:=1/C^{\frac{4}{4-d}}_{\Psi}$, $\mathcal{A}_{\Psi}^a(\cdot,\cdot)$ is coercive with
    	 \begin{equation}\label{coercivity of bf}
    		\mathcal{A}_{\Psi}^a(\mathbf{u},\mathbf{u}) \geq \frac{aC_0}{2}\|\mathbf{u}\|^2_{H^1_0}, \quad \mathbf{u}\in [H^1_0(\Omega)]^2.
    	\end{equation}
    \end{enumerate}
 \begin{proof}
 	The symmetry is straightforward to verify. As for the boundedness and coerciveness, they follow directly from Lemma \ref{some props} $(ii)$ and $(iii)$, respectively.
 \end{proof}
\end{lemma}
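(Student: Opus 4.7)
The plan is to derive all three claims as direct consequences of Lemma \ref{some props}, with essentially no work beyond bookkeeping. I would treat each property in turn: symmetry of $\mathcal{A}_{\Psi}^a$, its boundedness, and finally the coercivity for $a$ small enough.

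For symmetry, I would split $\mathcal{A}_{\Psi}^a(\mathbf{u},\mathbf{v}) = (\mathbf{u},\mathbf{v})_{L^2} + a\langle \mathcal{H}_{\Psi}\mathbf{u},\mathbf{v}\rangle$ and check each piece. The real $L^2$ inner product is symmetric by definition. Inside $\mathcal{H}_{\Psi}$, the Dirichlet Laplacian $-\tfrac{1}{2}\Delta$ is symmetric on $H_0^1(\Omega)$ via the divergence theorem, multiplication by the real-valued $V_i$ and $\rho_i(\Psi)$ is obviously symmetric on $L^2$, the operator $L_z = -\ii(x\partial_y - y\partial_x)$ is symmetric via integration by parts (the Dirichlet condition eliminates all boundary terms), and the off-diagonal coupling $\beta(u_1,v_2)_{L^2} + \beta(u_2,v_1)_{L^2}$ is visibly invariant under swapping $\mathbf{u}$ and $\mathbf{v}$. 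Summing these, $\mathcal{A}_{\Psi}^a$ is symmetric. For boundedness, I would bound the $L^2$ pairing by Cauchy--Schwarz together with the Poincar\'e inequality, and invoke part (ii) of Lemma \ref{some props} for the operator term, so that $|\mathcal{A}_{\Psi}^a(\mathbf{u},\mathbf{v})| \le (C + aC_{\Psi})\|\mathbf{u}\|_{H_0^1}\|\mathbf{v}\|_{H_0^1}$.

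For coercivity I would set $\mathbf{v} = \mathbf{u}$ in the definition and apply the lower bound (iii) from Lemma \ref{some props}, giving
\begin{align*}
\mathcal{A}_{\Psi}^a(\mathbf{u},\mathbf{u})
&\ge \|\mathbf{u}\|_{L^2}^2 + a\left(\tfrac{C_0}{2}\|\mathbf{u}\|_{H_0^1}^2 - C_{\Psi}^{4/(4-d)}\|\mathbf{u}\|_{L^2}^2\right) \\
&= \bigl(1 - aC_{\Psi}^{4/(4-d)}\bigr)\|\mathbf{u}\|_{L^2}^2 + \tfrac{aC_0}{2}\|\mathbf{u}\|_{H_0^1}^2.
\end{align*}
The threshold $a_{\Psi} = 1/C_{\Psi}^{4/(4-d)}$ is precisely chosen to make the $L^2$ coefficient vanish, so for any $0 < a \le a_{\Psi}$ it is nonnegative and the stated bound $\mathcal{A}_{\Psi}^a(\mathbf{u},\mathbf{u}) \ge \tfrac{aC_0}{2}\|\mathbf{u}\|_{H_0^1}^2$ follows.

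There is no real obstacle here: once Lemma \ref{some props} is granted, the proof is algebraic. The only delicate points are the symmetry of $L_z$ on $H_0^1(\Omega)$, which is handled cleanly by the Dirichlet boundary condition, and the implicit requirement that $C_{\Psi} < \infty$ so that $a_{\Psi}$ is well defined, which is automatic from $\Psi \in \mathcal{M} \subset [H_0^1(\Omega)]^2$ via the Gagliardo--Nirenberg inequality \eqref{G-N inequality} already invoked in the statement of Lemma \ref{some props}.
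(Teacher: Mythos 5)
Your proposal is correct and follows essentially the same route as the paper: the paper's proof simply cites Lemma \ref{some props} (ii) and (iii) for boundedness and coercivity, and your computation $\mathcal{A}_{\Psi}^a(\mathbf{u},\mathbf{u}) \ge (1 - aC_{\Psi}^{4/(4-d)})\|\mathbf{u}\|_{L^2}^2 + \tfrac{aC_0}{2}\|\mathbf{u}\|_{H_0^1}^2$ is exactly the omitted step that makes those citations work. The termwise symmetry check (including $L_z$ via integration by parts under the Dirichlet condition) is the intended ``straightforward'' verification, so nothing is missing.
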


\begin{remark}
    Apparently, if all entries of $K$ are non-negative, then $\langle \mathcal{H}_{\Psi}\mathbf{u},\mathbf{u} \rangle \geq \langle \mathcal{H}_{\mathbf{0}}\mathbf{u},\mathbf{u} \rangle \geq C_0\|\mathbf{u}\|_{H^1_0}^2$, which means $\mathcal{H}_{\Psi}$ and $\mathcal{A}_{\Psi}^a(\cdot,\cdot)$ are both coercive. But it's not a necessary condition to positive definiteness of $K$. However, even without assuming that all entries of $K$ are non-negative nor choosing $a$ so technical way like Lemma \ref{Property-A} $(ii)$, we can still assume the operator $\mathcal{H}_{\Psi}$ is coercive in later discussion. That is because, for any $\|\Psi\|_{H^1_0}\le M$, we can always replace $\mathcal{H}_{\Psi}$ by $\langle \mathcal{H}_{\Psi}\cdot,\cdot \rangle + g_{M}(\cdot,\cdot)_{L^2}$ where $g_{M}>0$ is an appropriately chosen constant such that $\langle \mathcal{H}_{\Psi}\cdot,\cdot \rangle + g_{M}(\cdot,\cdot)_{L^2}$ is coercive according to (\ref{operator boundede below}). In the sense of minimization problem, we can view this change corresponds to adding $g_{M}$ to $V(\mathbf{x})$, the constant shift do not change the global minimizer of (\ref{mini problem}).
In a word, the $\mathcal{A}_{\Psi}^a(\cdot,\cdot)$ is always coercive when $0<a\le a_{\Psi}$ is satisfied. And we assume $\mathcal{H}_{\Psi}$ to be coercive with $\langle \mathcal{H}_{\Psi}\mathbf{u},\mathbf{u} \rangle \geq\langle \mathcal{H}_{\bm{0}}\mathbf{u},\mathbf{u} \rangle\geq C_0\|\mathbf{u}\|_{H^1_0}^2$ in the remainder of this paper. While readers need to be aware that the case of all entries of $K$ being non-negative is validly covered, and in a general context it's still a reasonable assumption based on discussions above.
\end{remark}

\section{The reformulation of GFSI algorithm and energy dissipation}

In this section, we review the GFSI algorithm to the coupled Gross-Pitaevskii energy functional and provide a rigorous proof of energy dissaption by providing an equivalent form.

In general, in order to discretize the continuous normalized gradient flow, we adopt the time-splitting method and semi-implicit time discretization. This discrete normalized gradient flow with semi-implicit time discretization algorithm (GFSI) has become the most widely used semi-discretization algorithm for computing the ground state of BEC on account of the implicity and stable numerical simulation.

\subsection{The GFSI algorithm}

The gradient flow with discrete normalization (GFDN) is obtained by applying the time-splitting method to CNGF(\ref{CNGF}). For a fixed time step size, denote the time steps as $t_n=n\tau (\tau>0)$ for $n=0,1,\cdots$, we have the GFDN as 
\begin{equation}\label{GFDN}
    \begin{aligned}
        & \partial_t \psi_1 = -\mathcal{H}_{\psi_1}\psi_1 - \beta \psi_2,\quad  \psi_1 =0 \quad \text{on} \quad \partial\Omega,\\
        & \partial_t \psi_2 = -\mathcal{H}_{\psi_2}\psi_2 - \beta \psi_1,\quad \psi_2 =0 \quad \text{on} \quad \partial\Omega,\\
        & \Psi(\mathbf{x},t_{n+1}):=\Psi(\mathbf{x},t^+_{n+1})=\Psi(\mathbf{x},t^{-}_{n+1})/\|\Psi(\mathbf{x},t^{-}_{n+1})\|_{L^2},
    \end{aligned}
\end{equation}
with initial value $\Psi(\mathbf{x},0) = \Psi_0(\mathbf{x}) \in \mathcal{M}$.
Furthermore, the GFSI algorithm is obtained by applying a smei-implicit discretization in time variable to GFDN(\ref{GFDN}). Let $\Psi^n=(\psi_1^n,\psi_2^n)^{\top}=(\psi_1(\cdot,t_n),\psi_2(\cdot,t_n))^{\top}$ be the numerical approximation, we have the GFSI reads (for $ n \geq 0$ and $i=1,2.$)
\[ \begin{aligned}
    &\frac{\widetilde{\psi}^{n+1}_1 - \psi_1^n}{\tau} = -\mathcal{H}_{\psi_1^n}\widetilde{\psi}^{n+1}_1 -\beta \widetilde{\psi}^{n+1}_2, \quad 
    \psi_1^{n+1}=\frac{\widetilde{\psi}^{n+1}_1}{\|\widetilde{\Psi}^{n+1}\|_{L^2}};\\
    &\frac{\widetilde{\psi}^{n+1}_2 - \psi_2^n}{\tau} = -\mathcal{H}_{\psi_2^n}\widetilde{\psi}^{n+1}_2 -\beta \widetilde{\psi}^{n+1}_1 , \quad 
    \psi_2^{n+1}=\frac{\widetilde{\psi}^{n+1}_2}{\|\widetilde{\Psi}^{n+1}\|_{L^2}}, \quad \widetilde{\psi}^{n+1}_i=0 \quad \text{on} \quad \partial\Omega,
\end{aligned} \]
in which $\|\widetilde{\Psi}\|^2_{L^2} = \|\widetilde{\psi}^{n+1}_1\|^2_{L^2} + \|\widetilde{\psi}^{n+1}_1\|^2_{L^2}$, $(\psi^0_1,\psi^0_2)^{\top}=\Psi_0\in \mathcal{M}$.

By denoting $\widetilde{\Psi}^{n+1}=\left( \widetilde{\psi}^{n+1}_1,\widetilde{\psi}^{n+1}_2 \right)^{\top}$ and Hamiltonian operator (\ref{p-frame operator}), we can rewrite GFSI as:
\begin{equation}\label{original GFSI}
    \frac{\widetilde{\Psi}^{n+1} - \Psi^n}{\tau} = -\mathcal{H}_{\Psi^n}\widetilde{\Psi}^{n+1}, \hspace{0.5em}     \Psi^{n+1}=\widetilde{\Psi}^{n+1}/\|\widetilde{\Psi}^{n+1}\|_{L^2}, \hspace{0.5em} \widetilde{\Psi}^{n+1}=0 \hspace{0.5em} \text{on} \hspace{0.5em} \partial\Omega,
\end{equation}
where $\Psi^0=\Psi_0 \in \mathcal{M}$.
\begin{remark}\label{L-M lemma}
    Noticing (\ref{bound of operator}) and the coercivity of $\mathcal{A}_{\Psi}^a (0<a\leq a_{\Psi})$, we stated in (\ref{coercivity of bf}) that, by Lax-Milgram Lemma, for any $\mathbf{w}\in H^*$, there exists a unique $\mathbf{u}\in [H^1_0(\Omega)]^2$, s.t.
    \[
        \mathcal{A}_{\Psi}^a(\mathbf{u},\mathbf{v})= \langle \mathbf{w},\mathbf{v} \rangle, \quad \text{for any} \quad \mathbf{v}\in [H^1_0(\Omega)]^2,
    \]
    where $\mathbf{u}$ can be denoted by $(I+a\mathcal{H}_{\Psi})^{-1}\mathbf{w} \in [H^1_0(\Omega)]^2$.
    And also $(I+a\mathcal{H}_{\Psi})^{-1}\mathbf{w}$ is the unique weak solution to the equation $(I+a\mathcal{H}_{\Psi}) \mathbf{u}=\mathbf{w}$ with homogeneous boundary condition.
\end{remark}
Note that (\ref{original GFSI}) is equivalent to $(I+\tau \mathcal{H}_{\Psi^n})\widetilde{\Psi}^{n+1}=\Psi^n$ with $\widetilde{\Psi}^{n+1}=0$ on $\partial \Omega$, then by (\ref{bound of operator}) and (\ref{coercivity of bf}) as well as Remark \ref{L-M lemma}, we have 
$$
\widetilde{\Psi}^{n+1}=(I+\tau \mathcal{H}_{\Psi^n})^{-1}\Psi^n, \qquad n\geq 0, $$
as the weak solution of (\ref{original GFSI}) is well-defined for any $0<\tau \leq a_{\Psi^n}:=1/C^{\frac{4}{4-d}}_{\Psi^n}$. 
According to the existence of $\widetilde{\Psi}^{n+1} \in [H^1_0(\Omega)]^2$, we claim $\mathcal{H}_{\Psi^n}\widetilde{\Psi}^{n+1} \in [H^1_0(\Omega)]^2$ combined with (\ref{original GFSI}). It's non-trivial as notice that for any $\Psi^n \in [H^1_0(\Omega)]^2$,the operator $\mathcal{H}_{\Psi^n} : [H^1_0(\Omega)]^2 \to H^*$ imlies that $\mathcal{H}_{\Psi^n}\mathbf{u}$ usually be in $H^*$ with a genaral $\mathbf{u} \in [H^1_0(\Omega)]^2$ ($[H^1_0(\Omega)]^2 \subset H^*$ is canonical inclusion).

\subsection{The energy dissipation}

In this subsection, we provide following theorem concerned with the energy dissipation of GFSI (\ref{original GFSI}) and some important and necessary preparatory work to prove the theorem.

\begin{theorem}\label{THM}
    With \textbf{A1} and \textbf{A2} satisfied, for any given initial $\Psi_0 \in \mathcal{M}$, there exists $\tau_0 >0$ ($\tau_0$ depends on $E(\Psi_0)$ and $k_m$) such that for any $0<\tau<\tau_0$, the sequence $\{\Psi^n\}_{n \in \mathbb{N}}$ generated by (\ref{original GFSI}) is well-defined and energy dissipative, i.e.
    \begin{equation}\label{energy dis}
        E(\Psi^{n+1})-E(\Psi^n)\leq - \frac{C_0}{2}\|\Psi^{n+1}-\Psi^n\|_{H^1_0}^2, \quad n \geq 0.
    \end{equation} 
\end{theorem}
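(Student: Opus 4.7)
The plan is to recast the GFSI recursion in Lagrange-multiplier form, derive a sharp one-step identity for $E(\Psi^{n+1})-E(\Psi^n)$, and then absorb a quartic interaction remainder via the coercivity of $\mathcal{H}_{\Psi^n}$. Setting $\eta^{n+1}:=\|\widetilde\Psi^{n+1}\|_{L^2}$ so that $\widetilde\Psi^{n+1}=\eta^{n+1}\Psi^{n+1}$, the scheme $(I+\tau\mathcal{H}_{\Psi^n})\widetilde\Psi^{n+1}=\Psi^n$ rearranges to
$$\frac{\Psi^{n+1}-\Psi^n}{\tau}=-\mathcal{H}_{\Psi^n}\Psi^{n+1}+\lambda^{n+1}\Psi^n,\qquad\lambda^{n+1}:=\frac{1-\eta^{n+1}}{\tau\,\eta^{n+1}}.$$
Testing $(I+\tau\mathcal{H}_{\Psi^n})\widetilde\Psi^{n+1}=\Psi^n$ against $\widetilde\Psi^{n+1}$ and invoking the nonnegativity $\langle\mathcal{H}_{\Psi^n}\widetilde\Psi^{n+1},\widetilde\Psi^{n+1}\rangle\geq 0$ (the coercivity adopted in the Remark after Lemma \ref{Property-A}) yields $\eta^{n+1}\leq 1$, hence $\lambda^{n+1}\geq 0$.

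Next I pair the displayed equation with $\Phi:=\Psi^{n+1}-\Psi^n$ in $(\cdot,\cdot)_{L^2}$. From $\|\Psi^n\|_{L^2}=\|\Psi^{n+1}\|_{L^2}=1$ one obtains $(\Psi^n,\Phi)_{L^2}=-\tfrac{1}{2}\|\Phi\|_{L^2}^2$; combined with the symmetric-bilinear identity $2\langle\mathcal{H}_{\Psi^n}\Psi^{n+1},\Phi\rangle=\langle\mathcal{H}_{\Psi^n}\Psi^{n+1},\Psi^{n+1}\rangle-\langle\mathcal{H}_{\Psi^n}\Psi^n,\Psi^n\rangle+\langle\mathcal{H}_{\Psi^n}\Phi,\Phi\rangle$, this gives
$$\langle\mathcal{H}_{\Psi^n}\Psi^{n+1},\Psi^{n+1}\rangle-\langle\mathcal{H}_{\Psi^n}\Psi^n,\Psi^n\rangle=-\langle\mathcal{H}_{\Psi^n}\Phi,\Phi\rangle-\Bigl(\tfrac{2}{\tau}+\lambda^{n+1}\Bigr)\|\Phi\|_{L^2}^2.$$
Writing $I(\Psi):=\tfrac{1}{2}\sum_{i,j}k_{ij}\!\int\!|\psi_i|^2|\psi_j|^2\,\mathrm{d}\mathbf{x}$ and $\Delta_i:=|\psi_i^{n+1}|^2-|\psi_i^n|^2$, a direct expansion together with the symmetry of $K$ yields the elementary identity
$$\tfrac{1}{2}\sum_{i,j}k_{ij}\!\int\!\Delta_i\Delta_j\,\mathrm{d}\mathbf{x}=I(\Psi^{n+1})+I(\Psi^n)-\sum_{i,j}k_{ij}\!\int\!|\psi_i^{n+1}|^2|\psi_j^n|^2\,\mathrm{d}\mathbf{x}.$$
Combining the two lines with $E(\Psi^n)=\langle\mathcal{H}_{\Psi^n}\Psi^n,\Psi^n\rangle-I(\Psi^n)$ and $E(\Psi^{n+1})=\langle\mathcal{H}_{\Psi^n}\Psi^{n+1},\Psi^{n+1}\rangle-\sum_i\!\int\!\rho_i(\Psi^n)|\psi_i^{n+1}|^2+I(\Psi^{n+1})$ produces the key one-step identity
$$E(\Psi^{n+1})-E(\Psi^n)=-\langle\mathcal{H}_{\Psi^n}\Phi,\Phi\rangle-\Bigl(\tfrac{2}{\tau}+\lambda^{n+1}\Bigr)\|\Phi\|_{L^2}^2+\tfrac{1}{2}\sum_{i,j}k_{ij}\!\int\!\Delta_i\Delta_j\,\mathrm{d}\mathbf{x}.$$

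The main obstacle is absorbing the last, sign-indefinite quartic remainder $R$. Using $|\Delta_i|\leq|\psi_i^{n+1}+\psi_i^n|\,|\psi_i^{n+1}-\psi_i^n|$, Hölder's inequality, the Gagliardo--Nirenberg inequality (\ref{G-N inequality}), and the unit-mass bound $\|\Psi^{n+1}+\Psi^n\|_{L^2}\leq 2$, one arrives at
$$|R|\leq Ck_m\|\Psi^{n+1}+\Psi^n\|_{4}^{2}\|\Phi\|_{4}^{2}\leq Ck_mM^{d/2}\|\Phi\|_{L^2}^{2-d/2}\|\Phi\|_{H^1_0}^{d/2},$$
where $M\geq\max(\|\Psi^n\|_{H^1_0},\|\Psi^{n+1}\|_{H^1_0})$. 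Young's inequality then splits this as $|R|\leq\tfrac{C_0}{2}\|\Phi\|_{H^1_0}^2+C(k_m,M)\|\Phi\|_{L^2}^2$; the $H^1_0$-part is absorbed by $-\langle\mathcal{H}_{\Psi^n}\Phi,\Phi\rangle\leq-C_0\|\Phi\|_{H^1_0}^2$, producing the advertised $-\tfrac{C_0}{2}\|\Phi\|_{H^1_0}^2$, while the $L^2$-part is dominated by $-\tfrac{2}{\tau}\|\Phi\|_{L^2}^2$ provided $\tau\leq\tau_0=\tau_0(k_m,M)$. The statement is then closed by induction: well-posedness of $\widetilde\Psi^{n+1}$ follows from the coercivity of $\mathcal{A}_{\Psi^n}^\tau$ (Lemma \ref{Property-A}), and once dissipation has been proved through step $n$ the monotonicity $E(\Psi^{n+1})\leq E(\Psi_0)$ combined with (\ref{H0}) and $\|\Psi^{n+1}\|_{L^2}=1$ yields a uniform $H^1_0$-bound $M=M(E(\Psi_0))$, so that the threshold $\tau_0$ depends only on $E(\Psi_0)$ and $k_m$ as claimed.
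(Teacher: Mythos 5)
Your reformulation, the sign $\lambda^{n+1}\ge 0$, the one-step identity obtained by testing with $\Phi=\Psi^{n+1}-\Psi^n$, and the treatment of the quartic remainder by H\"older, Gagliardo--Nirenberg and Young are exactly the paper's route, and those computations are correct. The problem is the very last step, where you close the argument ``by induction'': the Young-splitting constant $C(k_m,M)$ requires $M\ge\max(\|\Psi^n\|_{H^1_0},\|\Psi^{n+1}\|_{H^1_0})$, i.e.\ an a priori $H^1_0$-bound on the \emph{new} iterate $\Psi^{n+1}$, but you justify this bound only through $E(\Psi^{n+1})\le E(\Psi_0)$, which is precisely the dissipation inequality at step $n$ that you are in the middle of proving. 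The induction hypothesis only controls $\Psi^0,\dots,\Psi^n$, so the argument as written is circular at exactly the point where the bound is needed.

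This gap is not cosmetic, because the obvious cheap fix does not work: testing $(I+\tau\mathcal{H}_{\Psi^n})\widetilde\Psi^{n+1}=\Psi^n$ with $\widetilde\Psi^{n+1}$ and using coercivity only gives $\|\widetilde\Psi^{n+1}\|_{H^1_0}\le (\tau C_0)^{-1/2}$, and feeding $M\sim\tau^{-1/2}$ into your Young constant makes $C(k_m,M)$ blow up too fast relative to the available $-\tfrac{2}{\tau}\|\Phi\|_{L^2}^2$ term, so no admissible $\tau_0$ emerges. What is genuinely needed is a $\tau$-independent bound $\|\Psi^{n+1}\|_{H^1_0}\le C\sqrt{E(\Psi^n)}$ established \emph{before} the dissipation step, and this is what the paper supplies in Lemmas \ref{pre-lemma 1}--\ref{LMA 2nd-solu H-norm}: first the auxiliary positivity $\langle\mathcal{H}_{\Psi^n}\Psi^n,\mathcal{H}_{\Psi^n}\widetilde\Psi^{n+1}\rangle\ge 0$ (which uses $\mathcal{H}_{\Psi^n}\widetilde\Psi^{n+1}\in[H^1_0(\Omega)]^2$), then the bound \eqref{1st-solu H-norm}, $\|\widetilde\Psi^{n+1}\|_{H^1_0}\le\sqrt{2E(\Psi^n)/C_0}$, and finally the lower bound $\|\widetilde\Psi^{n+1}\|_{L^2}\ge 1/2$ under the restriction $\tau\le 1/(4E(\Psi^n))$, which together give \eqref{2nd-solu H-norm}. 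With these in hand the induction closes (since $E(\Psi^n)\le E(\Psi_0)$ makes all thresholds uniform in $n$), but without them your $\tau_0=\tau_0(E(\Psi_0),k_m)$ is not justified. To repair your proof, insert this a priori estimate for $\Psi^{n+1}$ (or an equivalent $\tau$-independent bound on the normalized iterate) before invoking Young's inequality.
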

The proof is slightly tedious, we start with some important preparation.\\
With the existence of GFSI, representing (\ref{original GFSI}) in a equivalent form by plugging $\widetilde{\Psi}^{n+1}=\|\widetilde{\Psi}^{n+1}\|_{L^2}\Psi^{n+1}$ into (\ref{original GFSI}) obtain that
\begin{equation}\label{GFSI}
    \frac{\Psi^{n+1} - \Psi^n}{\tau} = -\mathcal{H}_{\Psi^n}\Psi^{n+1} + \lambda^{n+1}\Psi^n, \quad \lambda^{n+1}=\frac{1-\|\widetilde{\Psi}^{n+1}\|_{L^2}}{\tau \|\widetilde{\Psi}^{n+1}\|_{L^2}},
\end{equation}
where $\Psi^n=0$ on $\partial \Omega$, $n \geq 0$.

For the convenience of later expression, always denote 
\[
\begin{aligned}
    \mathbf{\rho}^n &:= \begin{pmatrix} |\psi_1^n|^2 \\ |\psi_2^n|^2 \end{pmatrix}, \quad
    \mathbf{\rho}^{n+1} := \begin{pmatrix} |\psi_1^{n+1}|^2 \\ |\psi_2^{n+1}|^2 \end{pmatrix}, \quad
    \hat{\mathbf{\rho}}^{n+1} := \begin{pmatrix} |\psi_1^{n+1} - \psi_1^n|^2 \\ |\psi_2^{n+1} - \psi_2^n|^2 \end{pmatrix},
\end{aligned}
\]
in specific discussion.
\begin{lemma}\label{LMA 1st-solu L-norm}
    Suppose \textbf{A1} and \textbf{A2} are satisfied, for any given initial $\Psi_0 \in \mathcal{M}$, the norm of $\widetilde{\Psi}^{n+1}$ generated by (\ref{original GFSI}) in $L^2$ is is less than or equal to $1$, i.e.
    \begin{equation}\label{1st-solu L-norm}
        \|\widetilde{\Psi}^{n+1}\|_{L^2} \leq 1, \quad n \geq 0.
    \end{equation}
\end{lemma}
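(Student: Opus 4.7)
The plan is to test the semi-implicit equation against $\widetilde{\Psi}^{n+1}$ itself in the $L^2$ inner product and exploit the non-negativity of $\langle \mathcal{H}_{\Psi^n}\cdot,\cdot\rangle$ that is guaranteed (after the constant shift explained in the remark) by the coercivity estimate $\langle \mathcal{H}_{\Psi^n}\mathbf{u},\mathbf{u}\rangle \geq C_0\|\mathbf{u}\|_{H^1_0}^2$. Since $\Psi^n \in \mathcal{M}$ for all $n \geq 0$ (this holds by induction: $\Psi^0 = \Psi_0 \in \mathcal{M}$ and subsequent iterates are normalized by construction), we have $\|\Psi^n\|_{L^2} = 1$, and we may use this as the only input about $\Psi^n$.

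First I would rewrite $(\ref{original GFSI})$ as $\widetilde{\Psi}^{n+1} - \Psi^n = -\tau\, \mathcal{H}_{\Psi^n}\widetilde{\Psi}^{n+1}$ and pair both sides with $\widetilde{\Psi}^{n+1}$ in $(\cdot,\cdot)_{L^2}$. This yields
\begin{equation*}
\|\widetilde{\Psi}^{n+1}\|_{L^2}^2 - (\Psi^n, \widetilde{\Psi}^{n+1})_{L^2} = -\tau\, \langle \mathcal{H}_{\Psi^n}\widetilde{\Psi}^{n+1}, \widetilde{\Psi}^{n+1}\rangle \leq -\tau C_0 \|\widetilde{\Psi}^{n+1}\|_{H^1_0}^2 \leq 0,
\end{equation*}
so that $\|\widetilde{\Psi}^{n+1}\|_{L^2}^2 \leq (\Psi^n, \widetilde{\Psi}^{n+1})_{L^2}$. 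Applying Cauchy–Schwarz on the right with $\|\Psi^n\|_{L^2}=1$ gives $\|\widetilde{\Psi}^{n+1}\|_{L^2}^2 \leq \|\widetilde{\Psi}^{n+1}\|_{L^2}$, and dividing by $\|\widetilde{\Psi}^{n+1}\|_{L^2}$ (handling the trivial case $\widetilde{\Psi}^{n+1}=0$ separately) yields the claim $\|\widetilde{\Psi}^{n+1}\|_{L^2} \leq 1$.

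There is essentially no obstacle here: the only subtlety is that the argument relies on $\langle \mathcal{H}_{\Psi^n}\cdot,\cdot\rangle$ being non-negative, which is what the coercivity convention adopted in the remark after Lemma \ref{Property-A} provides; and on $\Psi^n$ already lying on the sphere $\mathcal{M}$, which is the inductive hypothesis supplied by the normalization step in $(\ref{original GFSI})$. The existence/uniqueness of $\widetilde{\Psi}^{n+1}$ was already recorded via Lax–Milgram in Remark \ref{L-M lemma}, so no additional well-posedness work is needed in this lemma.
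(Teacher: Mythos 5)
Your argument is correct and is essentially identical to the paper's own proof: both test the semi-implicit equation against $\widetilde{\Psi}^{n+1}$ in the $L^2$ pairing, drop the non-negative term $\tau\langle \mathcal{H}_{\Psi^n}\widetilde{\Psi}^{n+1},\widetilde{\Psi}^{n+1}\rangle$ via the coercivity convention, and finish with Cauchy--Schwarz and $\|\Psi^n\|_{L^2}=1$. Your added remarks on the zero case and the inductive normalization are fine but not needed beyond what the paper states.
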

\begin{proof}
    Apply act about $\tau \widetilde{\Psi}^{n+1}$ on both sides of (\ref{original GFSI}):
    \[
        \|\widetilde{\Psi}^{n+1}\|^2_{L^2}=(\Psi^n,\widetilde{\Psi}^{n+1})_{L^2} - \tau \langle \mathcal{H}_{\Psi^n}\widetilde{\Psi}^{n+1},\widetilde{\Psi}^{n+1} \rangle.
    \]
    Note the coercivity of $\mathcal{H}_{\Psi_n}$ and $\|\Psi^n\|_{L^2}=1$, we have
    \[
        \|\widetilde{\Psi}^{n+1}\|^2_{L^2} \leq (\Psi^n,\widetilde{\Psi}^{n+1})_{L^2} \leq \|\Psi^n\|_{L^2} \|\widetilde{\Psi}^{n+1}\|_{L^2},
    \]
    which implies $\|\widetilde{\Psi}^{n+1}\|_{L^2} \leq 1$.
\end{proof}
\begin{corollary}\label{Corollary}
    Suppose \textbf{A1} and \textbf{A2} are satisfied, for any given initial $\Psi_0 \in \mathcal{M}$, the Lagrange multiplier  $\lambda^{n+1}$ generated by the reformulated GFSI algorithm (\ref{GFSI}) is always non-negative, i.e.
    $$\lambda^{n+1} \geq 0, \quad \text{for} \quad n \geq 0. $$
\end{corollary}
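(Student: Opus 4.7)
The plan is to obtain this as an essentially immediate consequence of Lemma \ref{LMA 1st-solu L-norm}. Recall that from the reformulation (\ref{GFSI}) one has
\[
\lambda^{n+1} = \frac{1 - \|\widetilde{\Psi}^{n+1}\|_{L^2}}{\tau \|\widetilde{\Psi}^{n+1}\|_{L^2}}.
\]
Since $\tau>0$, the sign of $\lambda^{n+1}$ is completely determined by the signs of the numerator $1 - \|\widetilde{\Psi}^{n+1}\|_{L^2}$ and the denominator $\|\widetilde{\Psi}^{n+1}\|_{L^2}$. Lemma \ref{LMA 1st-solu L-norm} already gives $\|\widetilde{\Psi}^{n+1}\|_{L^2} \leq 1$, so the numerator is non-negative. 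Hence the only thing left to check is that the denominator is strictly positive, i.e., that $\widetilde{\Psi}^{n+1} \neq 0$.

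For this I will use the representation $\widetilde{\Psi}^{n+1} = (I+\tau\mathcal{H}_{\Psi^n})^{-1}\Psi^n$ established in the discussion following Remark \ref{L-M lemma} (valid for $0<\tau\le a_{\Psi^n}$, which we will be assuming throughout). Since $(I+\tau\mathcal{H}_{\Psi^n})^{-1}$ is an invertible linear operator on the relevant spaces and $\Psi^n\in\mathcal{M}$ satisfies $\|\Psi^n\|_{L^2}=1 \neq 0$, we must have $\widetilde{\Psi}^{n+1}\neq 0$, so $\|\widetilde{\Psi}^{n+1}\|_{L^2}>0$. Combining the two facts yields $\lambda^{n+1}\ge 0$ immediately, which proves the corollary. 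There is no genuine obstacle here; the corollary is a direct bookkeeping consequence of the bound already proved in Lemma \ref{LMA 1st-solu L-norm}, and its purpose is presumably to record the non-negativity of $\lambda^{n+1}$ for use in the energy dissipation argument of Theorem \ref{THM}.
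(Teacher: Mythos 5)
Your proposal is correct and follows the same route as the paper, which simply notes the corollary is immediate from the formula for $\lambda^{n+1}$ in (\ref{GFSI}) together with Lemma \ref{LMA 1st-solu L-norm}. Your extra check that $\|\widetilde{\Psi}^{n+1}\|_{L^2}>0$ (via invertibility of $I+\tau\mathcal{H}_{\Psi^n}$ and $\|\Psi^n\|_{L^2}=1$) is a harmless and welcome bit of added rigor that the paper leaves implicit.
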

\begin{proof}
    The corollary is trivial to verify associated (\ref{GFSI}) with Lemma \ref{LMA 1st-solu L-norm} .
\end{proof}

The following series of Lemmas are intended to show the norm of $\Psi^{n+1}$ in $[H^1_0(\Omega)]^2$ can be upper bounded by expression related to $E(\Psi^n)$.

\begin{lemma}\label{pre-lemma 1}
Suppose \textbf{A1} and \textbf{A2} are satisfied and given initial $\Psi_0$ be in $\mathcal{M}$, for $\{\Psi^n\}_{n \in \mathbb{N}}$ generated by (\ref{original GFSI}) and $\widetilde{\Psi}^{n+1}=\|\widetilde{\Psi}^{n+1}\|_{L^2}\Psi^{n+1}$, we have that $$\langle\mathcal{H}_{\Psi^n}\Psi^n,\mathcal{H}_{\Psi^n}\widetilde{\Psi}^{n+1} \rangle \geq 0, \quad \text{for all} \quad n \geq 0.$$
\end{lemma}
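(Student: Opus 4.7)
The strategy is to convert the cross pairing $\langle\mathcal{H}_{\Psi^n}\Psi^n, \mathcal{H}_{\Psi^n}\widetilde{\Psi}^{n+1}\rangle$ into a diagonal quadratic form in $\mathbf{u} := \mathcal{H}_{\Psi^n}\widetilde{\Psi}^{n+1}$ by invoking the GFSI equation itself; once this is done, the coercivity of $\mathcal{H}_{\Psi^n}$ closes the argument without any further smallness condition on $\tau$ beyond that which guarantees the GFSI is well-defined.

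First I would rewrite (\ref{original GFSI}) as $\Psi^n = \widetilde{\Psi}^{n+1} + \tau\mathcal{H}_{\Psi^n}\widetilde{\Psi}^{n+1}$, and apply $\mathcal{H}_{\Psi^n}$ to both sides to get the identity $\mathcal{H}_{\Psi^n}\Psi^n = (I+\tau\mathcal{H}_{\Psi^n})\mathbf{u}$ in $H^*$. Crucially, as already observed in the excerpt immediately after equation (\ref{original GFSI}), the relation $\tau\mathbf{u} = \Psi^n - \widetilde{\Psi}^{n+1}$ places $\mathbf{u}$ in $[H^1_0(\Omega)]^2$ rather than merely in $H^*$, which is precisely what lets us subsequently apply $\mathcal{H}_{\Psi^n}$ to $\mathbf{u}$ and interpret the resulting dual pairing against $\mathbf{u}$ itself as the bilinear form $\mathcal{A}_{\Psi^n}^\tau(\mathbf{u},\mathbf{u})$.

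Substituting the identity into the pairing and invoking the definition (\ref{bilinear form}) of $\mathcal{A}_{\Psi^n}^\tau$ then yields
\begin{equation*}
\langle\mathcal{H}_{\Psi^n}\Psi^n,\, \mathcal{H}_{\Psi^n}\widetilde{\Psi}^{n+1}\rangle = \langle (I+\tau\mathcal{H}_{\Psi^n})\mathbf{u},\mathbf{u}\rangle = \mathcal{A}_{\Psi^n}^\tau(\mathbf{u},\mathbf{u}) = \|\mathbf{u}\|_{L^2}^2 + \tau\langle \mathcal{H}_{\Psi^n}\mathbf{u}, \mathbf{u}\rangle.
\end{equation*}
By the coercivity of $\mathcal{H}_{\Psi^n}$ adopted in the remark following Lemma \ref{Property-A}, the second term satisfies $\tau\langle\mathcal{H}_{\Psi^n}\mathbf{u},\mathbf{u}\rangle\ge \tau C_0\|\mathbf{u}\|_{H^1_0}^2 \ge 0$; the first term is manifestly non-negative; adding them finishes the proof.

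The only mildly delicate point is the functional-analytic bookkeeping — making sure $\mathbf{u}$ genuinely lives in $[H^1_0(\Omega)]^2$ so that $\mathcal{H}_{\Psi^n}\mathbf{u}$ and the coercivity estimate both make sense — but this is already recorded in the paragraph following (\ref{original GFSI}). Consequently I do not anticipate any substantive obstacle; the lemma is essentially a rearrangement of the GFSI combined with the positivity of $I+\tau\mathcal{H}_{\Psi^n}$ in the bilinear-form sense.
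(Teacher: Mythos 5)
Your proposal is correct and follows essentially the same route as the paper: both substitute $\Psi^n=\widetilde{\Psi}^{n+1}+\tau\mathcal{H}_{\Psi^n}\widetilde{\Psi}^{n+1}$ into the pairing, use that $\mathbf{u}=\mathcal{H}_{\Psi^n}\widetilde{\Psi}^{n+1}\in [H^1_0(\Omega)]^2$ (as noted after (\ref{original GFSI})), and conclude from $\|\mathbf{u}\|_{L^2}^2+\tau\langle\mathcal{H}_{\Psi^n}\mathbf{u},\mathbf{u}\rangle\ge 0$ via the coercivity of $\mathcal{H}_{\Psi^n}$. Writing this as $\mathcal{A}_{\Psi^n}^{\tau}(\mathbf{u},\mathbf{u})$ is only a cosmetic repackaging of the paper's argument.
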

\begin{proof}
    Notice that $\Psi^n=\widetilde{\Psi}^{n+1} + \tau \mathcal{H}_{\Psi^n}\widetilde{\Psi}^{n+1}$ by (\ref{original GFSI}), we obtain
\[  \begin{aligned}
    \langle \mathcal{H}_{\Psi^n}\Psi^n,\mathcal{H}_{\Psi^n}\widetilde{\Psi}^{n+1} \rangle & = \langle \mathcal{H}_{\Psi^n}\widetilde{\Psi}^{n+1} + \mathcal{H}_{\Psi^n}(\tau \mathcal{H}_{\Psi^n}\widetilde{\Psi}^{n+1}),\mathcal{H}_{\Psi^n}\widetilde{\Psi}^{n+1} \rangle\\
    & = \langle \mathcal{H}_{\Psi^n}\widetilde{\Psi}^{n+1},\mathcal{H}_{\Psi^n}\widetilde{\Psi}^{n+1} \rangle + \tau \langle \mathcal{H}_{\Psi^n}(\mathcal{H}_{\Psi^n}\widetilde{\Psi}^{n+1}),\mathcal{H}_{\Psi^n}\widetilde{\Psi}^{n+1} \rangle\\
    & \geq \|\mathcal{H}_{\Psi^n}\widetilde{\Psi}^{n+1}\|^2_{L^2} + C_0\tau\|\mathcal{H}_{\Psi^n}\widetilde{\Psi}^{n+1}\|^2_{H^1_0} \geq 0.
    \end{aligned} \]
    In last line we use the fact that $\mathcal{H}_{\Psi^n}\widetilde{\Psi}^{n+1} \in [H^1_0(\Omega)]^2$ and $\mathcal{H}_{\Psi^n}$ is coercive.
\end{proof}

\begin{lemma}\label{pre-lemma 2}
    With \textbf{A1} and \textbf{A2} satisfied, for any given initial $\Psi_0 \in \mathcal{M}$, we have the norm of $\widetilde{\Psi}^{n+1}$ in $[H^1_0(\Omega)]^2$ can be upper bounded by expression related to $E(\Psi^n)$ for any steps $n$, i.e.
    \begin{equation}\label{1st-solu H-norm}
         \|\widetilde{\Psi}^{n+1}\|_{H^1_0} \leq \sqrt{\frac{2}{C_0}E(\Psi^n)}, \quad n \geq 0.
    \end{equation}
    where $C_0$ is the coercive constant of $\mathcal{H}_{\Psi^n}$ as previously stated.
\end{lemma}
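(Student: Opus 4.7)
The plan is to chain three estimates to arrive at \eqref{1st-solu H-norm}: coercivity of $\mathcal{H}_{\Psi^n}$ reduces $\|\widetilde{\Psi}^{n+1}\|_{H^1_0}^2$ to $\langle\mathcal{H}_{\Psi^n}\widetilde{\Psi}^{n+1},\widetilde{\Psi}^{n+1}\rangle$; a one-step monotonicity argument driven by Lemma \ref{pre-lemma 1} upgrades this to $\langle\mathcal{H}_{\Psi^n}\Psi^n,\Psi^n\rangle$; and an algebraic identity together with Lemma \ref{some props}(i) bounds the latter by $2E(\Psi^n)$.

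For the monotonicity step I would substitute $\Psi^n=\widetilde{\Psi}^{n+1}+\tau\mathcal{H}_{\Psi^n}\widetilde{\Psi}^{n+1}$ into $\langle\mathcal{H}_{\Psi^n}\Psi^n,\Psi^n\rangle$, using symmetry of the bilinear form $\langle\mathcal{H}_{\Psi^n}\cdot,\cdot\rangle$ together with the fact, established just before the present subsection, that $\mathcal{H}_{\Psi^n}\widetilde{\Psi}^{n+1}\in[H^1_0(\Omega)]^2$, which allows the pairing $\langle\mathcal{H}_{\Psi^n}\widetilde{\Psi}^{n+1},\mathcal{H}_{\Psi^n}\widetilde{\Psi}^{n+1}\rangle$ to be identified with $\|\mathcal{H}_{\Psi^n}\widetilde{\Psi}^{n+1}\|_{L^2}^2$. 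A direct expansion then yields
\begin{equation*}
\langle\mathcal{H}_{\Psi^n}\Psi^n,\Psi^n\rangle-\langle\mathcal{H}_{\Psi^n}\widetilde{\Psi}^{n+1},\widetilde{\Psi}^{n+1}\rangle=\tau\|\mathcal{H}_{\Psi^n}\widetilde{\Psi}^{n+1}\|_{L^2}^2+\tau\langle\mathcal{H}_{\Psi^n}\Psi^n,\mathcal{H}_{\Psi^n}\widetilde{\Psi}^{n+1}\rangle,
\end{equation*}
whose right-hand side is nonnegative — the first term trivially, the second by Lemma \ref{pre-lemma 1} — so that $\langle\mathcal{H}_{\Psi^n}\widetilde{\Psi}^{n+1},\widetilde{\Psi}^{n+1}\rangle\le\langle\mathcal{H}_{\Psi^n}\Psi^n,\Psi^n\rangle$.

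For the final link, comparing \eqref{total energy} with \eqref{additive structure} gives the identity
\begin{equation*}
\langle\mathcal{H}_{\Psi}\Psi,\Psi\rangle=E(\Psi)+\tfrac{1}{2}\sum_{i=1,2}\int_{\Omega}\rho_i(\Psi)|\psi_i|^2\,\mathrm{d}\mathbf{x},
\end{equation*}
so that $2E(\Psi^n)-\langle\mathcal{H}_{\Psi^n}\Psi^n,\Psi^n\rangle=\langle\mathcal{H}_{\mathbf{0}}\Psi^n,\Psi^n\rangle\ge 0$ by the coercivity of $\mathcal{H}_{\mathbf{0}}$ in Lemma \ref{some props}(i); nonnegativity of $E(\Psi^n)$ itself, needed to take the square root, follows from the same identity together with $\sum_i\int_{\Omega}\rho_i(\Psi^n)|\psi_i^n|^2\,\mathrm{d}\mathbf{x}\ge 0$ under \textbf{A2}. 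Combining everything delivers $C_0\|\widetilde{\Psi}^{n+1}\|_{H^1_0}^2\le 2E(\Psi^n)$, which is \eqref{1st-solu H-norm}. The main difficulty is the monotonicity step: neither an $L^2$-test of the GFSI equation (as in Lemma \ref{LMA 1st-solu L-norm}) nor a naive Cauchy–Schwarz attempt directly produces $H^1_0$ control, and the sign information required to close the chain is exactly what Lemma \ref{pre-lemma 1} was designed to furnish — moreover it must be combined with the $H^1_0$-regularity of $\mathcal{H}_{\Psi^n}\widetilde{\Psi}^{n+1}$ for the displayed identity to make sense.
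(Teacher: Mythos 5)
Your proposal is correct and follows essentially the same route as the paper: coercivity of $\mathcal{H}_{\Psi^n}$, the sign information from Lemma \ref{pre-lemma 1} (together with the $H^1_0$-regularity of $\mathcal{H}_{\Psi^n}\widetilde{\Psi}^{n+1}$), and the identity $\langle\mathcal{H}_{\Psi^n}\Psi^n,\Psi^n\rangle = 2E(\Psi^n)-\langle\mathcal{H}_{\mathbf{0}}\Psi^n,\Psi^n\rangle \le 2E(\Psi^n)$. The only cosmetic difference is that you expand $\langle\mathcal{H}_{\Psi^n}\Psi^n,\Psi^n\rangle$ by substituting $\Psi^n=\widetilde{\Psi}^{n+1}+\tau\mathcal{H}_{\Psi^n}\widetilde{\Psi}^{n+1}$ in both slots, whereas the paper tests \eqref{original GFSI} with $\tau\mathcal{H}_{\Psi^n}\widetilde{\Psi}^{n+1}$ and then substitutes in one slot — the same algebra, rearranged.
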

\begin{proof}
    Firstly apply act about $\tau \mathcal{H}_{\Psi^n}\widetilde{\Psi}^{n+1}$ on both sides of (\ref{original GFSI}):
    \[\begin{aligned}
(\widetilde{\Psi}^{n+1},\mathcal{H}_{\Psi^n}\widetilde{\Psi}^{n+1})_{L^2} & = (\Psi^n,\mathcal{H}_{\Psi^n}\widetilde{\Psi}^{n+1})_{L^2} - \tau (\mathcal{H}_{\Psi^n}\widetilde{\Psi}^{n+1},\mathcal{H}_{\Psi^n}\widetilde{\Psi}^{n+1})_{L^2}\\
 & \leq (\Psi^n,\mathcal{H}_{\Psi^n}\widetilde{\Psi}^{n+1})_{L^2} = \langle \mathcal{H}_{\Psi^n}\Psi^n,\widetilde{\Psi}^{n+1} \rangle,
    \end{aligned} \]
with the symmetry of $\langle \mathcal{H}_{\Psi^n}\cdot,\cdot \rangle$ and $(\cdot,\cdot)_{L^2}$.
    Then notice
\[
(\widetilde{\Psi}^{n+1},\mathcal{H}_{\Psi^n}\widetilde{\Psi}^{n+1})_{L^2} = \langle \mathcal{H}_{\Psi^n}\widetilde{\Psi}^{n+1},\widetilde{\Psi}^{n+1} \rangle \geq C_0\|\widetilde{\Psi}^{n+1}\|^2_{H^1_0},
\]
associated with (\ref{total energy}), (\ref{additive structure}) and (\ref{p-frame operator}) suggests 
\[ \begin{aligned}
    C_0\|\widetilde{\Psi}^{n+1}\|^2_{H^1_0} 
    & \leq \langle \mathcal{H}_{\Psi^n}\Psi^n,\widetilde{\Psi}^{n+1} \rangle  = \langle \mathcal{H}_{\Psi^n}\Psi^n,\Psi^n - \tau \mathcal{H}_{\Psi^n}\widetilde{\Psi}^{n+1} \rangle \\
    & \leq \langle \mathcal{H}_{\Psi^n}\Psi^n,\Psi^n \rangle  = E(\Psi^n) + \frac{1}{2}\int_{\Omega}(\mathbf{\rho}^n)^{\top}K\mathbf{\rho}^n\mathrm{d}\mathbf{x} \\
    & = E(\Psi^n) +(E(\Psi^n)-\langle \mathcal{H}_{\mathbf{0}}\Psi^n,\Psi^n \rangle) \leq 2E(\Psi^n),
\end{aligned}  \]
which is also based on the fact of Lemma \ref{pre-lemma 1}. 
This completes the proof.
\end{proof}
\begin{lemma}\label{LMA 2nd-solu H-norm}
    Suppose assumptions \textbf{A1} and \textbf{A2} both satisfied, for any given initial $\Psi_0 \in \mathcal{M}$, the norm of $\Psi^{n+1}$ in $[H^1_0(\Omega)]^2$ can be upper bounded by expression related to $E(\Psi^n)$ by choosing appropriate time step size, $n \geq0$, i.e. 
    \begin{equation}\label{2nd-solu H-norm}
    \|\Psi^{n+1}\|_{H^1_0} \leq C_{E^n}, \quad C_{E^n}:=C\sqrt{E(\Psi^n)} \quad \text{for} \quad 0<\tau\leq1/(4E(\Psi^n)).
    \end{equation}
\end{lemma}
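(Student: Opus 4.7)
The plan is to reduce the $H^1_0$ bound on $\Psi^{n+1}$ to a lower bound on the normalization factor. Since $\Psi^{n+1}=\widetilde{\Psi}^{n+1}/\|\widetilde{\Psi}^{n+1}\|_{L^2}$, we have
\[
\|\Psi^{n+1}\|_{H^1_0}=\frac{\|\widetilde{\Psi}^{n+1}\|_{H^1_0}}{\|\widetilde{\Psi}^{n+1}\|_{L^2}},
\]
and Lemma~\ref{pre-lemma 2} already controls the numerator by $\sqrt{2E(\Psi^n)/C_0}$. All that remains is to show $\|\widetilde{\Psi}^{n+1}\|_{L^2}\geq \tfrac12$ for $\tau\leq 1/(4E(\Psi^n))$; combining these two facts will then give $\|\Psi^{n+1}\|_{H^1_0}\leq 2\sqrt{2E(\Psi^n)/C_0}=C\sqrt{E(\Psi^n)}$.

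To produce this lower bound I would test the equation $(I+\tau\mathcal{H}_{\Psi^n})\widetilde{\Psi}^{n+1}=\Psi^n$ against $\Psi^n$ in the $L^2$ pairing, obtaining
\[
(\widetilde{\Psi}^{n+1},\Psi^n)_{L^2}+\tau\langle\mathcal{H}_{\Psi^n}\widetilde{\Psi}^{n+1},\Psi^n\rangle=\|\Psi^n\|_{L^2}^2=1.
\]
A standard Cauchy--Schwarz gives $(\widetilde{\Psi}^{n+1},\Psi^n)_{L^2}\leq \|\widetilde{\Psi}^{n+1}\|_{L^2}$, so the task collapses to showing $|\langle\mathcal{H}_{\Psi^n}\widetilde{\Psi}^{n+1},\Psi^n\rangle|\leq 2E(\Psi^n)$. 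This is the main step.

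To handle the cross term I would exploit that $\mathcal{H}_{\Psi^n}$ is assumed symmetric and coercive, so the bilinear form $(u,v)\mapsto\langle\mathcal{H}_{\Psi^n}u,v\rangle$ is an inner product on $[H^1_0(\Omega)]^2$ and obeys its own Cauchy--Schwarz inequality:
\[
|\langle\mathcal{H}_{\Psi^n}\widetilde{\Psi}^{n+1},\Psi^n\rangle|\leq \sqrt{\langle\mathcal{H}_{\Psi^n}\widetilde{\Psi}^{n+1},\widetilde{\Psi}^{n+1}\rangle\,\langle\mathcal{H}_{\Psi^n}\Psi^n,\Psi^n\rangle}.
\]
Two energy estimates already in hand finish the bound. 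First, the computation in the proof of Lemma~\ref{pre-lemma 2} yields $\langle\mathcal{H}_{\Psi^n}\widetilde{\Psi}^{n+1},\widetilde{\Psi}^{n+1}\rangle\leq \langle\mathcal{H}_{\Psi^n}\Psi^n,\Psi^n\rangle$ (after discarding the non-negative $\tau$-term and using symmetry together with Lemma~\ref{pre-lemma 1}). Second, the identity used there, $\langle\mathcal{H}_{\Psi^n}\Psi^n,\Psi^n\rangle=E(\Psi^n)+\tfrac12\int_\Omega(\mathbf{\rho}^n)^\top K\mathbf{\rho}^n\,\mathrm{d}\mathbf{x}$, together with $\langle\mathcal{H}_{\mathbf 0}\Psi^n,\Psi^n\rangle\geq 0$, gives $\langle\mathcal{H}_{\Psi^n}\Psi^n,\Psi^n\rangle\leq 2E(\Psi^n)$. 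Therefore $|\langle\mathcal{H}_{\Psi^n}\widetilde{\Psi}^{n+1},\Psi^n\rangle|\leq 2E(\Psi^n)$.

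Putting everything together, $\|\widetilde{\Psi}^{n+1}\|_{L^2}\geq 1-2\tau E(\Psi^n)\geq\tfrac12$ whenever $\tau\leq 1/(4E(\Psi^n))$, and the claimed inequality $\|\Psi^{n+1}\|_{H^1_0}\leq C_{E^n}$ follows. The only delicate point is the Cauchy--Schwarz step for the bilinear form attached to $\mathcal{H}_{\Psi^n}$, which is justified precisely by the coercivity convention adopted throughout the paper; the remaining ingredients are direct consequences of Lemmas~\ref{pre-lemma 1}--\ref{pre-lemma 2} and the definition of $E$.
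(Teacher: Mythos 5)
Your proposal is correct and follows essentially the same route as the paper: test the equation against $\Psi^n$, bound the cross term $\langle\mathcal{H}_{\Psi^n}\widetilde{\Psi}^{n+1},\Psi^n\rangle$ by $2E(\Psi^n)$, conclude $\|\widetilde{\Psi}^{n+1}\|_{L^2}\geq 1/2$ for $\tau\leq 1/(4E(\Psi^n))$, and divide the bound of Lemma~\ref{pre-lemma 2}. The only (harmless) deviation is that you re-derive the cross-term bound via Cauchy--Schwarz in the $\langle\mathcal{H}_{\Psi^n}\cdot,\cdot\rangle$ semi-inner product, whereas the paper simply reuses the inequality $\langle\mathcal{H}_{\Psi^n}\Psi^n,\widetilde{\Psi}^{n+1}\rangle\leq 2E(\Psi^n)$ already obtained (via Lemma~\ref{pre-lemma 1}) inside the proof of Lemma~\ref{pre-lemma 2}; both steps are valid under the paper's coercivity and symmetry conventions.
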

\begin{proof}
    By the fact that $\|\Psi^n\|_{L^2}=1$ and $\langle \mathcal{H}_{\Psi^n}\Psi^n,\widetilde{\Psi}^{n+1} \rangle \leq 2E(\Psi^n)$ (which is implied in proof of Lemma \ref{pre-lemma 2}), apply act about $\tau \Psi^n$ on both sides of (\ref{original GFSI}) we have:
    \[
        (\widetilde{\Psi}^{n+1},\Psi^n)_{L^2} = (\Psi^n,\Psi^n)_{L^2} - \tau \langle \mathcal{H}_{\Psi^n}\widetilde{\Psi}^{n+1},\Psi^n \rangle \geq 1- \tau2E(\Psi^n).
    \]
    Then $(\widetilde{\Psi}^{n+1},\Psi^n)_{L^2} \leq \|\widetilde{\Psi}^{n+1}\|_{L^2}\|\Psi^n\|_{L^2} = \|\widetilde{\Psi}^{n+1}\|_{L^2}$ means 
    \[
        \|\widetilde{\Psi}^{n+1}\|_{L^2} \geq 1-\tau2E(\Psi^n).
    \]
    We might take $\tau \leq1/(4E(\Psi^n))$, then obtain that $\|\widetilde{\Psi}^{n+1}\|_{L^2} \geq 1/2$.
    Then according to (\ref{1st-solu H-norm}), we have:
    \[  \begin{aligned}
        \|\Psi^{n+1}\|_{H^1_0} 
=\|\widetilde{\Psi}^{n+1}/\|\widetilde{\Psi}^{n+1}\|_{L^2}\|_{H^1_0}\leq C\sqrt{E(\Psi^n)}, \quad \text{denoted as} \quad C_{E^n}.
    \end{aligned} \]
\end{proof}

With all the preparatory work from the previous subsection, we give the valid proof of energy dissipation in this subsection. To begin with, noting $\|\Psi^{n+1}\|_{L^2}=\|\Psi^n\|_{L^2}=1$ we have preliminary result that 
\[ \begin{aligned}
    2\lambda^{n+1}(\Psi^n, & \Psi^{n+1}-\Psi^n)_{L^2}
     = \lambda^{n+1}[2(\Psi^n,\Psi^{n+1})_{L^2}-2(\Psi^n,\Psi^n)_{L^2}]\\
    & = \lambda^{n+1}[(\Psi^n,\Psi^n)_{L^2}+(\Psi^{n+1},\Psi^{n+1})_{L^2}-\|\Psi^{n+1}-\Psi^n\|^2_{L^2}-2(\Psi^n,\Psi^n)_{L^2}]\\
    & = -\lambda^{n+1}\|\Psi^{n+1}-\Psi^n\|^2_{L^2}.
\end{aligned} \]
Then apply act about $2(\Psi^{n+1}-\Psi^n)$ on both sides of (\ref{GFSI}):
\[\begin{aligned}
    \frac{2}{\tau}\|\Psi^{n+1}-\Psi^n\|^2_{L^2} 
    &= -2\langle \mathcal{H}_{\Psi^n}\Psi^{n+1},\Psi^{n+1}-\Psi^n \rangle+2\lambda^{n+1}(\Psi^n,\Psi^{n+1}-\Psi^n)_{L^2}\\
    & = \langle \mathcal{H}_{\Psi^n}\Psi^n,\Psi^n \rangle - \langle \mathcal{H}_{\Psi^n}\Psi^{n+1},\Psi^{n+1} \rangle \\
    & \quad - \langle \mathcal{H}_{\Psi^n}(\Psi^{n+1}-\Psi^n),(\Psi^{n+1}-\Psi^n) \rangle - \lambda^{n+1}\|\Psi^{n+1}-\Psi^n\|^2_{L^2},
\end{aligned}
\]
by (\ref{H0}) and Corollary \ref{Corollary}  obtain that
\[
\begin{aligned}
    & \quad   \langle \mathcal{H}_{\Psi^n}\Psi^{n+1},\Psi^{n+1} \rangle - \langle \mathcal{H}_{\Psi^n}\Psi^n,\Psi^n \rangle \\
    & = -(\frac{2}{\tau}+\lambda^{n+1})\|\Psi^{n+1}-\Psi^n\|^2_{L^2} - \langle \mathcal{H}_{\Psi^n}(\Psi^{n+1}-\Psi^n),(\Psi^{n+1}-\Psi^n) \rangle \\
    & \leq -\frac{2}{\tau}\|\Psi^{n+1}-\Psi^n\|^2_{L^2} -C_0\|\Psi^{n+1}-\Psi^n\|_{H^1_0}^2.
\end{aligned}
\]

It's an equality that will be used in following proof. Hence, according to all these formulas and inequlities, we are ready to prove Theorem \ref{THM}.
\begin{proof}[Energy dissipation]
    Firstly, by (\ref{total energy}), (\ref{additive structure}) and notations we defined in the last subsection, we have:
    \[\begin{aligned}
        & E(\Psi^n) = \langle \mathcal{H}_{\Psi^n}\Psi^n,\Psi^n \rangle -\frac{1}{2}\int_{\Omega}(\mathbf{\rho}^n)^{\top}K\mathbf{\rho}^n\mathrm{d}\mathbf{x},\\
        & E(\Psi^{n+1}) = \langle \mathcal{H}_{\Psi^n}\Psi^{n+1},\Psi^{n+1} \rangle + \frac{1}{2}\int_{\Omega}(\mathbf{\rho}^{n+1})^{\top}K\mathbf{\rho}^{n+1}\mathrm{d}\mathbf{x} - \int_{\Omega}(\mathbf{\rho}^n)^{\top}K\mathbf{\rho}^{n+1}\mathrm{d}\mathbf{x}.
    \end{aligned}\]
    Substract the two formulas obtain that
    \[\begin{aligned}
         &E(\Psi^{n+1}) -E(\Psi^n) 
         =\langle \mathcal{H}_{\Psi^n}\Psi^{n+1},\Psi^{n+1} \rangle - \langle \mathcal{H}_{\Psi^n}\Psi^n,\Psi^n \rangle \\
         &\hspace{6cm}+ \frac{1}{2}\int_{\Omega}(\mathbf{\rho}^{n+1}-\mathbf{\rho}^n)^{\top}K(\mathbf{\rho}^{n+1}-\mathbf{\rho}^n)\mathrm{d}\mathbf{x}\\
        & \leq -\frac{2}{\tau}\|\Psi^{n+1}-\Psi^n\|^2_{L^2} -C_0\|\Psi^{n+1}-\Psi^n\|_{H^1_0}^2 +Ck_m\int_{\Omega}(\mathbf{\rho}^{n+1}-\mathbf{\rho}^n)^{\top}(\mathbf{\rho}^{n+1}-\mathbf{\rho}^n)\mathrm{d}\mathbf{x}\\
        & \leq -\frac{2}{\tau}\|\Psi^{n+1}-\Psi^n\|^2_{L^2} -C_0\|\Psi^{n+1}-\Psi^n\|_{H^1_0}^2 +Ck_m \int_{\Omega}(\mathbf{\rho}^n+\mathbf{\rho}^{n+1})^{\top}\hat{\mathbf{\rho}}^{n+1}\mathrm{d}\mathbf{x}.
    \end{aligned}
    \]
    Another aspect is to notice that by H\"{o}lder inequality, we have
    \[\begin{aligned}
        \int_{\Omega}(\mathbf{\rho}^n+\mathbf{\rho}^{n+1})^{\top}\hat{\mathbf{\rho}}^{n+1}\mathrm{d}\mathbf{x}
        & \leq C\left(\|\Psi^{n+1}\|_4^2+\|\Psi^{n}\|_4^2 \right)\|\Psi^{n+1}-\Psi^n\|_4^2.
    \end{aligned}\]
    Then based on (\ref{G-N inequality}) associated with Young's inequality $ab\leq \epsilon^{\frac{-d}{4-d}}a^{\frac{4}{4-d}} + \epsilon b^{\frac{4}{d}}$, $\|\Psi^{n+1}\|_{L^2} = 1$, and \eqref{2nd-solu H-norm}, we have
    \[\begin{aligned}
        Ck_m\int_{\Omega}(\mathbf{\rho}^n+\mathbf{\rho}^{n+1})^{\top}\hat{\mathbf{\rho}}^{n+1}\mathrm{d}\mathbf{x} 
        & \leq Ck_mC_{E^n}^{\frac{d}{2}} \left( \epsilon^{\frac{-d}{4-d}}\|\Psi^{n+1}-\Psi^n\|^2_{L^2} + \epsilon \|\Psi^{n+1}-\Psi^n\|_{H^1_0}^2 \right).
    \end{aligned}\]
    We might let
    \[
        \epsilon=\frac{C_0}{2Ck_mC_{E^n}^{\frac{d}{2}}}, \qquad \widetilde{C}_{E^n}= \left(2Ck_mC_{E^n}^{\frac{d}{2}} \right)^{\frac{4}{4-d}},
    \]
    then 
    \begin{align*}
    	E(\Psi^{n+1})-E(\Psi^n)\leq \left(\widetilde{C}_{E^n}-\frac{2}{\tau} \right)\|\Psi^{n+1}-\Psi^n\|^2_{L^2} - \frac{C_0}{2}\|\Psi^{n+1}-\Psi^n\|_{H^1_0}^2.
    \end{align*}
Therefore, choosing time step size $\tau$ such that $\tau \leq\frac{2}{\widetilde{C}_{E^n}}$ with \eqref{2nd-solu H-norm} leads to the energy dissipation, i.e. as long as at $n$-th step 
$$\tau_n = \min\left\{ \frac{2}{\widetilde{C}_{E^n}}, \frac{1}{4E(\Psi^n)} \right\},$$ 
(noting  $\frac{2}{\widetilde{C}_{E^n}}$ and $\frac{1}{4E(\Psi^n)}$ decrease as $n$ increases implies it's a reasonable choice). Thus, we have 
\begin{align*}
	E(\Psi^{n+1})-E(\Psi^n)\leq - \frac{C_0}{2}\|\Psi^{n+1}-\Psi^n\|_{H^1_0}^2,\qquad \text{for}\ 0<\tau\leq\tau_0.
\end{align*}
\end{proof}

\begin{remark}
    It should be noted that in our proof process, the use of the coercivity of $\mathcal{H}_{\Psi^n}$ and $\mathcal{A}_{\Psi^n}^{\tau}(\cdot,\cdot)$ was inevitable. Hence the time step size should be satiesfied $0< \tau \leq a_{\Psi^n}:=1/C^{\frac{4}{4-d}}_{\Psi^n}$. Noticing $C_{\Psi^n}=Ck_m\|\Psi^n\|^{\frac{d}{2}}_{H^1_0}$ combined with (\ref{2nd-solu H-norm}) implies that $a_{\Psi^n}$ is bounded by $C_{E^n}$, which is also bounded by the expression related to initial energy.
\end{remark}

\section{Global convergence of the GFSI}

The GFSI (\ref{original GFSI}) of two-component Gross-Pitaevskii energy functional exhibits global convergence, we provide the rigorous proof in this section.
\begin{theorem}\label{global convergence}
   Let $\{\Psi^n\}_{n\in\mathbb{N}}$ be the iteration sequence generated by (\ref{original GFSI}). Then, there exists a subsequence $\{\Psi^{n_j}\}_{j\in\mathbb{N}}$ and some $\Psi_s \in [H^1_0(\Omega)]^2$ such that $\Psi^{n_j}$ converges strongly to $\Psi_s$, which is a stationary state of the coupled Gross-Pitaevskii energy functional, i.e.,
   \begin{equation*}
   	\mathcal{H}_{\Psi_s} \Psi_s = \lambda_s \Psi_s \quad \text{with} \quad \lambda_s = \langle \mathcal{H}_{\Psi_s} \Psi_s, \Psi_s \rangle.
   \end{equation*}
\end{theorem}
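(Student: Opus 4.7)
The plan is to combine the energy dissipation inequality of Theorem \ref{THM} with standard weak compactness and compact embedding arguments, then pass to the limit in the weak formulation of the reformulated scheme (\ref{GFSI}), and finally upgrade weak to strong convergence in $[H^1_0(\Omega)]^2$ via a convergence-of-norms argument.

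First, under the coercivity of $\mathcal{H}_\Psi$ adopted in the paper one has $E(\Psi)\geq \langle\mathcal{H}_{\mathbf{0}}\Psi,\Psi\rangle\geq C_0\|\Psi\|_{H^1_0}^2\geq 0$, so the dissipation bound (\ref{energy dis}) shows that $E(\Psi^n)$ is non-increasing and bounded below, hence convergent, and telescoping gives $\sum_{n\geq 0}\|\Psi^{n+1}-\Psi^n\|_{H^1_0}^2\leq (2/C_0)E(\Psi_0)$, so $\|\Psi^{n+1}-\Psi^n\|_{H^1_0}\to 0$. In particular $\|\Psi^n\|_{H^1_0}$ is uniformly bounded, while the admissible step size $\tau$ is fixed. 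The compact embedding $H^1_0(\Omega)\hookrightarrow L^p(\Omega)$ for $p\in[1,2d/(d-2))$ then produces a subsequence with $\Psi^{n_j}\rightharpoonup\Psi_s$ weakly in $[H^1_0(\Omega)]^2$ and strongly in $[L^p(\Omega)]^2$; since $\Psi^{n_j+1}-\Psi^{n_j}\to 0$ strongly in $[H^1_0(\Omega)]^2$, the same convergences hold for $\Psi^{n_j+1}$ with the same limit, and $L^2$ strong convergence together with $\|\Psi^n\|_{L^2}=1$ yields $\|\Psi_s\|_{L^2}=1$.

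To identify the limit equation I would pass to the limit in the weak form of (\ref{GFSI}),
\[
\tau^{-1}(\Psi^{n+1}-\Psi^n,\mathbf{v})_{L^2}=-\langle\mathcal{H}_{\Psi^n}\Psi^{n+1},\mathbf{v}\rangle+\lambda^{n+1}(\Psi^n,\mathbf{v})_{L^2},\quad \mathbf{v}\in[H^1_0(\Omega)]^2.
\]
Testing with $\mathbf{v}=\Psi^n$ and using $(\Psi^{n+1}-\Psi^n,\Psi^n)_{L^2}=-\tfrac{1}{2}\|\Psi^{n+1}-\Psi^n\|_{L^2}^2$ gives $\lambda^{n+1}=\langle\mathcal{H}_{\Psi^n}\Psi^{n+1},\Psi^n\rangle-\tfrac{1}{2\tau}\|\Psi^{n+1}-\Psi^n\|_{L^2}^2$, which is uniformly bounded by Lemma \ref{some props}(ii), so after extracting a further subsequence $\lambda^{n_j+1}\to\lambda_s$. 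In the weak form the left-hand side vanishes because $\Psi^{n_j+1}-\Psi^{n_j}\to 0$ in $L^2$; on the right-hand side $\rho_i(\Psi^{n_j})\to\rho_i(\Psi_s)$ strongly in $L^2(\Omega)$ thanks to the $L^4$ strong convergence, which combined with $\psi_i^{n_j+1}\to\psi_i^s$ in $L^4$ and $v_i\in L^4$ handles the cubic term, while the gradient, potential, Josephson, and $L_z$ contributions all pass to the limit from $\Psi^{n_j+1}\rightharpoonup\Psi_s$ weakly in $H^1_0$ and strongly in $L^2$, using $V_i\in L^\infty$ and boundedness of $x,y$ on $\Omega$. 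Hence $\langle\mathcal{H}_{\Psi_s}\Psi_s,\mathbf{v}\rangle=\lambda_s(\Psi_s,\mathbf{v})_{L^2}$ for every $\mathbf{v}$, i.e.\ $\mathcal{H}_{\Psi_s}\Psi_s=\lambda_s\Psi_s$, and choosing $\mathbf{v}=\Psi_s$ together with $\|\Psi_s\|_{L^2}=1$ identifies $\lambda_s=\langle\mathcal{H}_{\Psi_s}\Psi_s,\Psi_s\rangle$.

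The main obstacle I expect is upgrading the weak $H^1_0$ convergence to strong convergence, because the pairing $\langle\mathcal{H}_{\Psi^{n_j}}\Psi^{n_j+1},\Psi^{n_j+1}\rangle$ is a weak--weak product of $H^1_0$ sequences and $\mathcal{H}_\Psi$ contains the first-order rotation term $L_z$. I would resolve this by testing (\ref{GFSI}) against $\Psi^{n+1}$; using $(\Psi^{n+1}-\Psi^n,\Psi^{n+1})_{L^2}=\tfrac{1}{2}\|\Psi^{n+1}-\Psi^n\|_{L^2}^2$ one obtains
\[
\langle\mathcal{H}_{\Psi^n}\Psi^{n+1},\Psi^{n+1}\rangle=\lambda^{n+1}(\Psi^n,\Psi^{n+1})_{L^2}-\tfrac{1}{2\tau}\|\Psi^{n+1}-\Psi^n\|_{L^2}^2,
\]
and passing to the limit along $n_j$ the right-hand side tends to $\lambda_s=\langle\mathcal{H}_{\Psi_s}\Psi_s,\Psi_s\rangle$. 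All lower-order terms in $\langle\mathcal{H}_{\Psi^{n_j}}\Psi^{n_j+1},\Psi^{n_j+1}\rangle$ (potential, cubic nonlinearity, Josephson coupling, and $L_z$, the last by pairing $\partial_y\psi^{n_j+1}\rightharpoonup\partial_y\psi^s$ weakly in $L^2$ with $x\bar\psi^{n_j+1}\to x\bar\psi^s$ strongly in $L^2$) converge to their $\Psi_s$ analogues, which forces $\|\Psi^{n_j+1}\|_{H^1_0}^2\to\|\Psi_s\|_{H^1_0}^2$. Norm convergence together with weak convergence in the Hilbert space $[H^1_0(\Omega)]^2$ gives strong convergence of $\Psi^{n_j+1}\to\Psi_s$, and $\|\Psi^{n_j+1}-\Psi^{n_j}\|_{H^1_0}\to 0$ transfers this to $\Psi^{n_j}\to\Psi_s$ strongly, completing the proof.
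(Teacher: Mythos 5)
Your proposal is correct, but it follows a genuinely different route from the paper. The paper never passes to the limit in the Lagrange-multiplier form (\ref{GFSI}) term by term; instead it works with the resolvent representation $\widetilde{\Psi}^{n+1}=(I+\tau\mathcal{H}_{\Psi^n})^{-1}\Psi^n$, defines $\widetilde{\Psi}_s=(I+\tau\mathcal{H}_{\Psi_s})^{-1}\Psi_s$, and proves strong $H^1_0$ convergence of $\widetilde{\Psi}^{n_j+1}$ to $\widetilde{\Psi}_s$ directly via the splitting $I_1=(I+\tau\mathcal{H}_{\Psi^{n_j}})^{-1}(\Psi^{n_j}-\Psi_s)$ and $I_2=\bigl[(I+\tau\mathcal{H}_{\Psi^{n_j}})^{-1}-(I+\tau\mathcal{H}_{\Psi_s})^{-1}\bigr]\Psi_s$, controlled by the resolvent smoothing estimate of Lemma \ref{LMA: gc pre} together with an $L^2$--$L^4$ bound on $\rho_i(\Psi^{n_j})-\rho_i(\Psi_s)$; the stationarity equation then drops out by applying $I+\tau\mathcal{H}_{\Psi_s}$ to the fixed-point identity $\|\widetilde{\Psi}_s\|_{L^2}\Psi_s=\widetilde{\Psi}_s$, with $\lambda_s=(1-\|\widetilde{\Psi}_s\|_{L^2})/(\tau\|\widetilde{\Psi}_s\|_{L^2})$. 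Your argument instead extracts boundedness and convergence of the multipliers $\lambda^{n_j+1}$ (via testing with $\Psi^n$ and the polarization identities $(\Psi^{n+1}-\Psi^n,\Psi^n)_{L^2}=-\tfrac12\|\Psi^{n+1}-\Psi^n\|_{L^2}^2$, $(\Psi^{n+1}-\Psi^n,\Psi^{n+1})_{L^2}=\tfrac12\|\Psi^{n+1}-\Psi^n\|_{L^2}^2$), passes to the limit in the weak formulation term by term (weak $H^1_0$ for the gradient, strong $L^2$/$L^4$ and weak--strong pairing for the potential, cubic, Josephson and $L_z$ terms), and then upgrades to strong $H^1_0$ convergence by a Radon--Riesz (norm-convergence) argument based on $\langle\mathcal{H}_{\Psi^{n_j}}\Psi^{n_j+1},\Psi^{n_j+1}\rangle\to\langle\mathcal{H}_{\Psi_s}\Psi_s,\Psi_s\rangle$; all the individual limit passages you invoke are legitimate on the bounded domain with $V_i\in L^\infty$ and the compact embeddings into $L^2$ and $L^4$. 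What each approach buys: the paper's resolvent argument yields strong $H^1_0$ convergence in one stroke from the a priori estimate of Lemma \ref{LMA: gc pre}, without needing boundedness of $\lambda^{n+1}$ or any term-by-term treatment of the quadratic form (in particular of the first-order rotation term), whereas your route is more elementary and self-contained---it uses only the dissipation inequality (\ref{energy dis}), standard compactness, and the continuity bound (\ref{bound of operator})---and it additionally produces the convergence $\lambda^{n_j+1}\to\lambda_s$ of the discrete multipliers, which the paper obtains only implicitly through $\|\widetilde{\Psi}^{n_j+1}\|_{L^2}\to\|\widetilde{\Psi}_s\|_{L^2}$.
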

To prove this theorem, we need one lemma as follows.
\begin{lemma}\label{LMA: gc pre}
    With any $\Psi \in \mathcal{M}$, $0<a\leq a_\Psi$, and $r=2 \hspace{0.5em}\text{or}\hspace{0.5em}4$, $r^{\prime}=\frac{r}{r-1}$, then for any $\mathbf{u}\in [L^{r^{\prime}}(\Omega)]^2$ we have 
    \[
        \|(I+a\mathcal{H}_{\Psi})^{-1}\mathbf{u}\|_{H^1_0} \leq \frac{C}{a}\|\mathbf{u}\|_{r^{\prime}},
    \]
    where $C$ independent of $V$ and $\Psi$.
\end{lemma}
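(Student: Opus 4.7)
The plan is to use the variational characterization of $\mathbf{w} := (I+a\mathcal{H}_{\Psi})^{-1}\mathbf{u}$ guaranteed by Remark 3.1 (Lax--Milgram) and then test the resulting weak equation against $\mathbf{w}$ itself, pairing coercivity of $\mathcal{A}_{\Psi}^a$ with a H\"older--Sobolev estimate for the right-hand side.

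First I would unpack the definition: $\mathbf{w}\in [H^1_0(\Omega)]^2$ is the unique element satisfying
\begin{equation*}
\mathcal{A}_{\Psi}^{a}(\mathbf{w},\mathbf{v}) = (\mathbf{u},\mathbf{v})_{L^2},\qquad \forall\, \mathbf{v}\in [H^1_0(\Omega)]^2.
\end{equation*}
This is justified by the boundedness and the coercivity of $\mathcal{A}_{\Psi}^{a}$ (Lemma \ref{Property-A}) under the hypothesis $0<a\le a_{\Psi}$. Choose $\mathbf{v}=\mathbf{w}$ and apply \eqref{coercivity of bf} on the left, and H\"older's inequality with conjugate exponents $r,r'$ on the right, to obtain
\begin{equation*}
\frac{aC_0}{2}\,\|\mathbf{w}\|_{H^1_0}^2 \;\le\; \mathcal{A}_{\Psi}^{a}(\mathbf{w},\mathbf{w}) \;=\; (\mathbf{u},\mathbf{w})_{L^2} \;\le\; \|\mathbf{u}\|_{r'}\,\|\mathbf{w}\|_{r}.
\end{equation*}

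Then I would control $\|\mathbf{w}\|_r$ by $\|\mathbf{w}\|_{H^1_0}$. For $r=2$ this is just the Poincar\'e inequality on the bounded domain $\Omega$; for $r=4$ it is the Gagliardo--Nirenberg/Sobolev embedding already recorded in \eqref{G-N inequality} (combined with $\|\mathbf{w}\|_{L^2}\le C\|\mathbf{w}\|_{H^1_0}$), valid for $d=2,3$. In either case $\|\mathbf{w}\|_r\le C\|\mathbf{w}\|_{H^1_0}$ with a constant depending only on $\Omega$ and $d$. Substituting and dividing by $\|\mathbf{w}\|_{H^1_0}$ (the trivial case $\mathbf{w}\equiv 0$ is handled separately) yields
\begin{equation*}
\|\mathbf{w}\|_{H^1_0} \;\le\; \frac{2C}{aC_0}\,\|\mathbf{u}\|_{r'},
\end{equation*}
which is the desired estimate after absorbing constants into the generic $C$.

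I do not foresee any real obstacle: the proof is essentially a one-line Lax--Milgram testing plus Sobolev embedding. The only subtlety is checking that the $C$ obtained is independent of $V$ and $\Psi$, which is true because $C_0$ is the universal coercivity constant from \eqref{H0} (depending only on $\alpha$), and the Sobolev/Poincar\'e constants depend only on $\Omega$ and $d$; the $\Psi$-dependence of the upper bound in \eqref{bound of operator} is never invoked, and the restriction $a\le a_{\Psi}$ is used only qualitatively to secure coercivity.
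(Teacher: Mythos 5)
Your proposal is correct and follows essentially the same route as the paper: test the Lax--Milgram identity $\mathcal{A}_{\Psi}^{a}(\mathbf{w},\mathbf{v})=(\mathbf{u},\mathbf{v})_{L^2}$ with $\mathbf{v}=\mathbf{w}$, use the coercivity \eqref{coercivity of bf} on the left and H\"older plus the Sobolev/Poincar\'e embedding $\|\mathbf{w}\|_r\le C\|\mathbf{w}\|_{H^1_0}$ on the right, then divide by $\|\mathbf{w}\|_{H^1_0}$. Your additional remarks on the constant's independence of $V$ and $\Psi$ are consistent with the paper's argument.
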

\begin{proof}
    By \eqref{bilinear form}, \eqref{coercivity of bf}, H\"{o}lder's inequality and Sobolev embedding, we have:
    \[\begin{aligned}
        a\|(I+a\mathcal{H}_{\Psi})^{-1}\mathbf{u}\|_{H^1_0}^2 
        &\leq C\mathcal{A}_{\Psi}^a\left((I+a\mathcal{H}_{\Psi})^{-1}\mathbf{u},(I+a\mathcal{H}_{\Psi})^{-1}\mathbf{u}\right)\\
        &= C\left(\mathbf{u},(I+a\mathcal{H}_{\Psi})^{-1}\mathbf{u}\right)_{L^2}\\
        & \leq C\|(I+a\mathcal{H}_{\Psi})^{-1}\mathbf{u}\|_r \|\mathbf{u}\|_{r^{\prime}}\\
        & \leq C\|(I+a\mathcal{H}_{\Psi})^{-1}\mathbf{u}\|_{H^1_0} \|\mathbf{u}\|_{r^{\prime}},
    \end{aligned}
    \]
    in which $r=2 \hspace{0.5em}\text{or}\hspace{0.5em}4$, $r^{\prime}=\frac{r}{r-1}$ and the fact that $(I+a\mathcal{H}_{\Psi})^{-1}\mathbf{u} \in [H^1_0(\Omega)]^2$ used.
\end{proof}
With this preparatory work, we can prove the Theorem \ref{global convergence} as follows.
\begin{proof}[Global convergence]
    Since \eqref{bound of operator}, \eqref{operator boundede below}, and $\|\Psi^n\|_{L^2}=1$ hold, the iteration sequence $\{\Psi^n\}_{n\in\mathbb{N}}$ is bounded in $[H^1_0(\Omega)]^2$. Then, there exists a $\Psi_s \in [H^1_0(\Omega)]^2$ and a subsequence $\{\Psi^{n_j}\}_{j\in\mathbb{N}}$ such that 
    \begin{equation}\label{pre_1}
        \psi_i^{n_j} \rightharpoonup (\Psi_s)_i \quad i=1,2 \quad \text{weakly in} \quad H_0^1(\Omega).
    \end{equation}
The compact embedding $H_0^1(\Omega) \hookrightarrow L^2(\Omega)$ and $H_0^1(\Omega) \hookrightarrow L^4(\Omega)$ then implies that:
\begin{equation}\label{pre_2}
    \psi_i^{n_j} \to (\Psi_s)_i \quad i=1,2 \quad \text{strongly in} \quad L^2(\Omega), \quad L^4(\Omega).
\end{equation}
Specifically, $\|\Psi_s\|_{L^2}=\lim_{j\to \infty}\|\Psi^{n_j}\|_{L^2}=1$, which indicates that $\Psi_s \in\mathcal{M}$.

By the notation in Remark \ref{L-M lemma} and denoting $\widetilde{\Psi}_s=(I+\tau \mathcal{H}_{\Psi_s})^{-1}\Psi_s \in [H^1_0(\Omega)]^2$, we have
\[\begin{aligned}
    \widetilde{\Psi}^{n_j+1}-\widetilde{\Psi}_s
    & =(I+\tau\mathcal{H}_{\Psi^{n_j}})^{-1}\Psi^{n_j}- (I+\tau \mathcal{H}_{\Psi_s})^{-1}\Psi_s \\
    & =\underbrace{(I+\tau\mathcal{H}_{\Psi^{n_j}})^{-1}(\Psi^{n_j}-\Psi_s)}_{I_1} + \underbrace{\left[(I+\tau\mathcal{H}_{\Psi^{n_j}})^{-1}-(I+\tau \mathcal{H}_{\Psi_s})^{-1} \right]\Psi_s}_{I_2}.
\end{aligned}\]
According to Lemma \ref{LMA: gc pre}, when $r^{\prime}=2$ we know that
\begin{equation}\label{pre_3}
    \|I_1\|_{H^1_0} \leq C\|\Psi^{n_j}-\Psi_s\|_{L^2}.
\end{equation}
And by direct calculation and (\ref{additive structure}), we get
\[\begin{aligned}
    I_2 
    &= \left[(I+\tau\mathcal{H}_{\Psi^{n_j}})^{-1}-(I+\tau \mathcal{H}_{\Psi_s})^{-1} \right]\Psi_s \\
    &= \left[(I+\tau\mathcal{H}_{\Psi^{n_j}})^{-1} (I+\tau \mathcal{H}_{\Psi_s})(I+\tau \mathcal{H}_{\Psi_s})^{-1} - (I+\tau \mathcal{H}_{\Psi_s})^{-1} \right]\Psi_s \\
    &= \left[ (I+\tau\mathcal{H}_{\Psi^{n_j}})^{-1}(I+\tau \mathcal{H}_{\Psi_s} - I+\tau\mathcal{H}_{\Psi^{n_j}})(I+\tau \mathcal{H}_{\Psi_s})^{-1} \right]\Psi_s \\
    &= \tau (I+\tau\mathcal{H}_{\Psi^{n_j}})^{-1} \left( \left( \rho_1(\Psi_s)-\rho_1(\Psi^{n_j}) \right)(\widetilde{\Psi}_s)_1,\left( \rho_2(\Psi_s)-\rho_2(\Psi^{n_j}) \right)(\widetilde{\Psi}_s)_2\right)^{\top}.
\end{aligned}
\]
By (\ref{density}) and H\"{o}lder's inequality, noticing 
\begin{equation}\label{pre_4}
\begin{aligned}
     \|\rho_i(\Psi_s)- &\rho_i(\Psi^{n_j})\|_2
      \leq \left\|\sum_{l=1,2}|k_{il}| \left( (\Psi_s)^2_l-(\Psi^{n_j})^2_l\right) \right\|_{L^2} \\
     & \leq \sum_{l=1,2} k_m \left\|(\Psi_s)_{L^2}+(\Psi^{n_j})_{L^2}\right\|_4\cdot \left\|(\Psi_s)_{L^2}-(\Psi^{n_j})_{L^2}\right\|_4\\
     & \leq C \left( \sum_{l=1,2}\|(\Psi_s)_{L^2}+(\Psi^{n_j})_{L^2}\|_4\right) \left( \sum_{l=1,2}\|(\Psi_s)_{L^2}-(\Psi^{n_j})_{L^2} \|_4 \right) \\
     & \leq C \left\|\Psi_s + \Psi^{n_j}\right\|_4 \left\|\Psi_s - \Psi^{n_j}\right\|_4.
\end{aligned}
\end{equation}
Then, according to (\ref{pre_4}), Lemma \ref{LMA: gc pre}, H\"{o}lder's inequality and Sobolev embedding, we have
\begin{equation}\begin{aligned}\label{pre_5}
    \|I_2\|_{H^1_0} & \leq C \sum_{i=1,2} \left\| \rho_i(\Psi_s)-\rho_i(\Psi^{n_j}) \right\|_2 \|(\widetilde{\Psi}_s)_i \|_4 \\
    & \leq C \|\Psi_s + \Psi^{n_j}\|_4 \|\Psi_s - \Psi^{n_j}\|_4 \sum_{i=1,2} \|(\widetilde{\Psi}_s)_i \|_4 \\
    & \leq C \|\Psi_s + \Psi^{n_j}\|_4 \|\Psi_s - \Psi^{n_j}\|_4 \|\widetilde{\Psi}_s \|_4 \leq C  \|\Psi_s - \Psi^{n_j}\|_4.
\end{aligned}
\end{equation}
Combining (\ref{pre_2}) (\ref{pre_3}) (\ref{pre_5}) and the fact that $\{\Psi^n \}_n$ is bounded in $[H^1_0(\Omega)]^2$, we can immediately prove the fact that
\[
    \widetilde{\Psi}^{n_j+1} \to \widetilde{\Psi}_s, \quad 
\text{strongly in} \quad [H^1_0(\Omega)]^2,\quad j \to \infty
\]
which further indicates
\[
    \Psi^{n_j+1}= \frac{\widetilde{\Psi}^{n_j+1}}{\|\widetilde{\Psi}^{n_j+1}\|_{L^2}} \to \frac{\widetilde{\Psi}_s}{\|\widetilde{\Psi}_s\|_{L^2}}, \quad 
\text{strongly in} \quad [H^1_0(\Omega)]^2.
\]

Noting (\ref{energy dis}) further implies that
\[
    \lim_{n \to\infty}\|\Psi^{n+1} - \Psi^n\|_{H^1_0} \leq \lim_{n \to \infty} C\sqrt{ E(\Psi^n)-E(\Psi^{n+1})} = 0,
\]
hence $\Psi^{n_j}$ has the same strong limit as $\Psi^{n_j+1}$, i.e. $\Psi^{n_j} \to \frac{\widetilde{\Psi}_s}{\|\widetilde{\Psi}_s\|_{L^2}}$, strongly in $[H^1_0(\Omega)]^2$. Then combined with (\ref{pre_5}) we have $\Psi_s=\frac{\widetilde{\Psi}_s}{\|\widetilde{\Psi}_s\|_{L^2}}$. Recalling the definition of $\widetilde{\Psi}_s$ previously stated implies 
\[
    \|\widetilde{\Psi}_s\|_{L^2} \Psi_s = (I+\tau \mathcal{H}_{\Psi_s})^{-1} \Psi_s,
\]
apply act of $I+\tau \mathcal{H}_{\Psi_s}$ on the both sides, we obtain $\mathcal{H}_{\Psi_s}\Psi_s = \frac{1-\|\widetilde{\Psi}_s\|_{L^2}}{\tau \|\widetilde{\Psi}_s\|_{L^2}} \Psi_s$, then $\lambda_s = \frac{1-\|\widetilde{\Psi}_s\|_{L^2}}{\tau \|\widetilde{\Psi}_s\|_{L^2}} = \langle \mathcal{H}_{\Psi_s}\Psi_s,\Psi_s \rangle$.
\end{proof}

\begin{remark}
    When trapping potential satisfies the confining condition, we can always artificially truncate the space $\mathbb{R}^d$ into a bounded domain with either homogeneous Dirichlet or periodic boundary conditions. In this work, to simplify the presentation, we only discuss the case of homogeneous Dirichlet boundary condition, the analysis can be directly generalized to periodic boundary condtion and the main results remain unchanged.
\end{remark}
\begin{remark}
    In this work, we mainly discuss the ground states of two-component BECs with Josephson junction and rotating term. For more general $p$-component case ($p>2$), things become more straightforward when there's no Josephson junction, which means the mass of each component is also conserved. Using the similar notations of Sobolev space, domain, inner product, time step, numerical approximation,  interaction strength, trapping potential and rotation frequency from this paper, we denote $\Phi = (\phi_1,\cdots,\phi_p)^{\top}$ and 
    \[
        \rho_i(\Phi) = \sum_{j=1}^{p} k_{ij}|\phi_j|^2, \quad \mathcal{H}_{\phi_i}= -\frac{1}{2} \Delta+V_i+\rho_i(\Phi)-\omega_i L_z, \quad 1 \leq i \leq p.
    \]
    Then GFSI algorithm in this case reads as 
    \[
        \frac{\widetilde{\phi}^{n+1}_i - \phi_i^n}{\tau} = -\mathcal{H}_{\phi_i^n}\widetilde{\phi}^{n+1}_i, \quad \phi_i^{n+1} = \widetilde{\phi}^{n+1}_i/\|\widetilde{\phi}^{n+1}_i\|_{L^2}, \quad \widetilde{\phi}^{n+1}_i=0 \quad \text{on} \quad \partial\Omega,
    \]
    for each $i$-th component ($i=1,\cdots,p$). Note that the only difference between this form and the single-component case is that $\rho_i(\Psi)$ replaces $|\phi|^2$ here, $\phi$ is a single-component wave function. So the proof of energy dissipation and global convergence in $p$-component case can be directly derived from Ref.~\cite{ref_Zixu}. 
\end{remark}

\section{Numerical experiments}

In this section, we numerically justify the energy-diminishing property of the GFSI algorithm (\ref{original GFSI}) when time step $\tau \leq \tau_0$, where $\tau_0$ depends on the strength of particle interaction $k_m = \max \left\{|k_{11}|, |k_{12}|, |k_{22}| \right\}$ according to \eqref{coercivity of bf}. 

As for the numerical settings, we consider the truncation ground state (\ref{mini problem}) with two-dimensional domain $\Omega = [-L,L] \times [-L,L]$ and harmonic external potential plus the strength of internal atomic Josephson junction:
$$V_1(\mathbf{x}) = V_2(\mathbf{x}) = \frac{|\mathbf{x}|^2}{2} + |\beta|, \quad \mathbf{x} \in \Omega.
$$
 Furthermore, we numerical discretize the domain $\Omega$ to equidistant grid points in two directions, i.e. $h=h_x=h_y$. With regard to the semi-discrete scheme (\ref{original GFSI}) we adopt central differences to approximate the first and second derivatives. The iteration of this full-discretized GFSI algorithm is terminated when the following condition is fulfilled:
\[
    \frac{\|\Psi^{n+1}-\Psi^n\|_{\infty}}{\tau} < 10^{-7},
\]
and the resulted $\Psi^{n+1}$ is viewed as ground state $\Psi_g$.
\begin{example}\label{example: energy decrease}
    In this numerical example, we examine the energy-diminishing property for interaction matrix $K$ being both positive-definite and all entries non-negative. Here we let  $L=4$, $\omega_1=0.5$, $\omega_2=0.5$, and initial data for GFSI algorithm are chosen as
    $(\Psi_0)_1 = (\Psi_0)_2 =  e^{-(x^2+y^2)/2}/\sqrt{2\pi} $. For these two cases we choose respectively
\begin{enumerate}
\item \textbf{Case 1}: all entries of interaction matrix $K$ are non-negative, i.e.  $k_{11}=100$, $k_{22}=97$, $k_{12}= 94$ and coupling strength $\beta=-5$.
\item \textbf{Case 2}: interaction matrix $K$ is positive-definite, i.e. $k_{11}=8.1$, $k_{22}=7.9$, $k_{12}= -0.94$ and coupling strength $\beta=0.2$.
\end{enumerate}
\end{example}
To distinguish the positive-definite case from all entries non-negative case, we specifically select $k_{12}$ as negative in \textbf{Case 2}. In these two cases we consider various time steps $\tau= 0.1$, $0.2$, $0.5$, $1.0$ with different mesh sizes $h= 1/4$, $1/8$, $1/16$, $1/32$, $1/64$.  
\begin{figure}[ht]
  \centering
    \centering
    \begin{minipage}{0.32\linewidth}
      \centering
      \includegraphics[width=\linewidth]{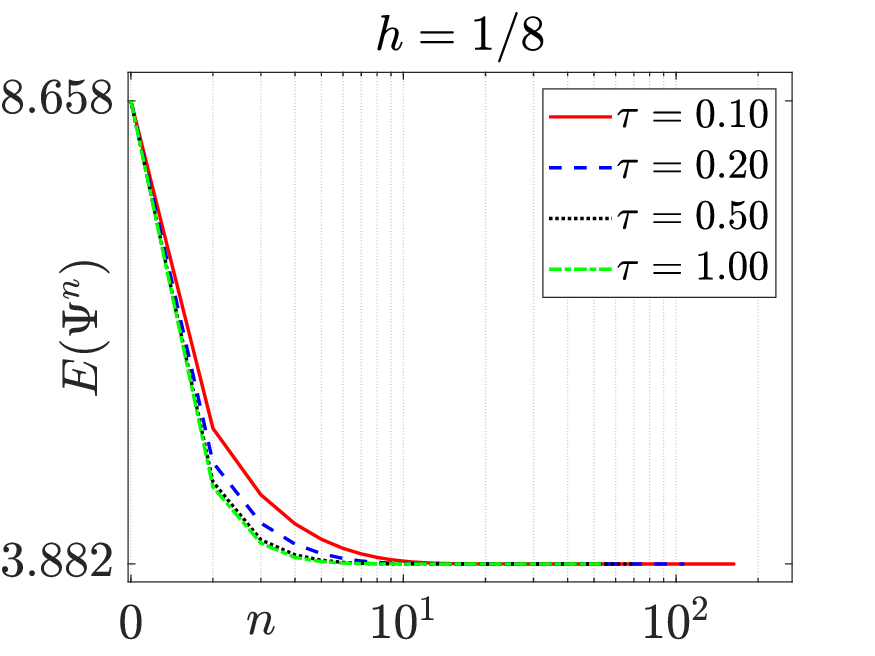}
    \end{minipage}\hfill
    \begin{minipage}{0.32\linewidth}
      \centering
      \includegraphics[width=\linewidth]{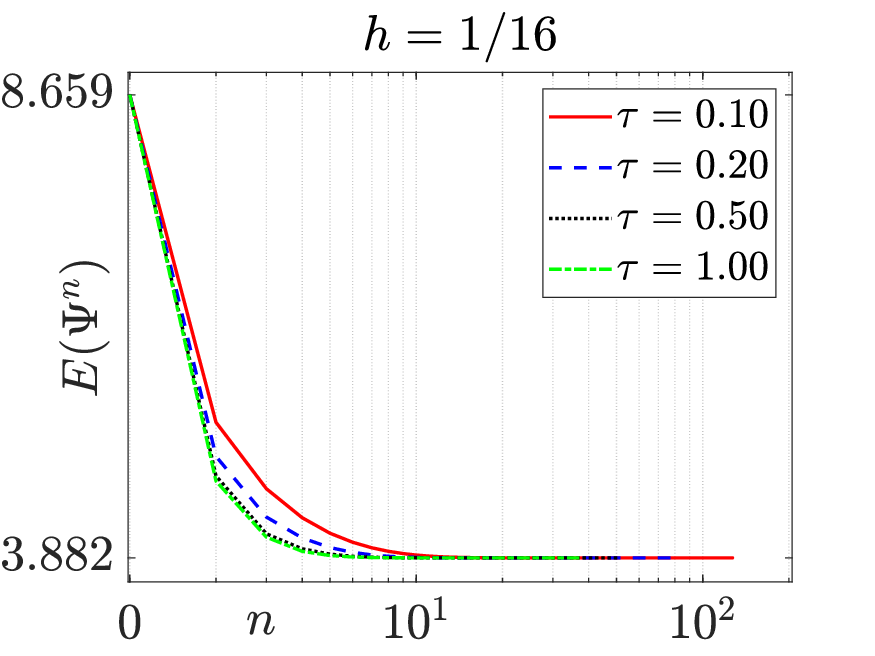}
    \end{minipage}\hfill
    \begin{minipage}{0.32\linewidth}
      \centering
      \includegraphics[width=\linewidth]{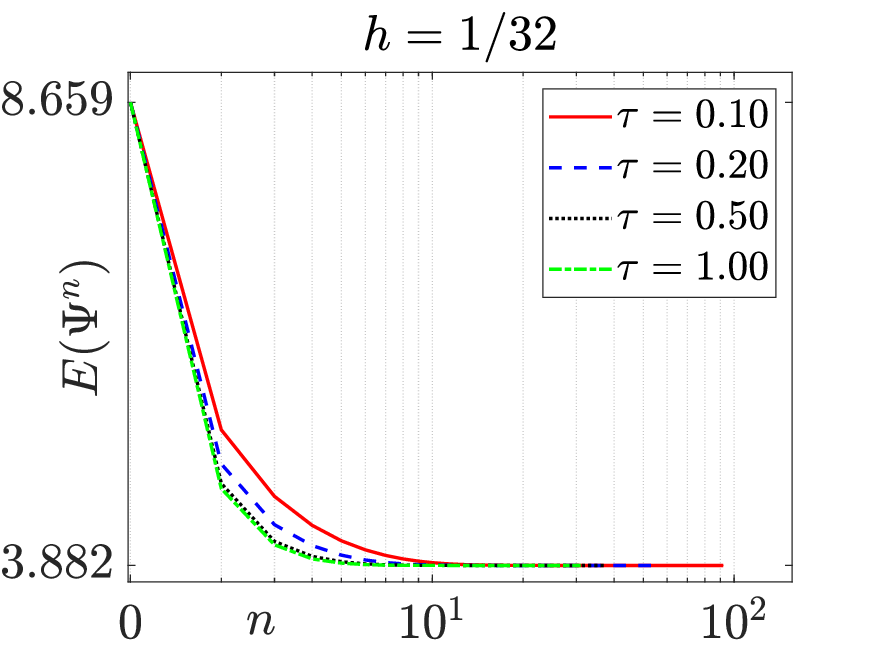}
    \end{minipage}
  \vspace{1em}
    \centering
    \begin{minipage}{0.32\linewidth}
      \centering
      \includegraphics[width=\linewidth]{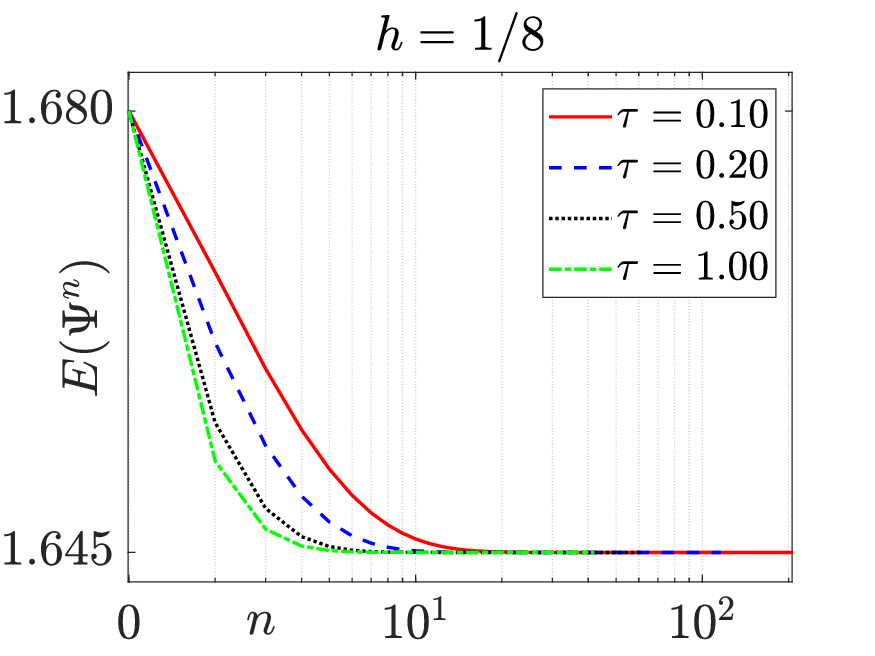}
    \end{minipage}\hfill
    \begin{minipage}{0.32\linewidth}
      \centering
      \includegraphics[width=\linewidth]{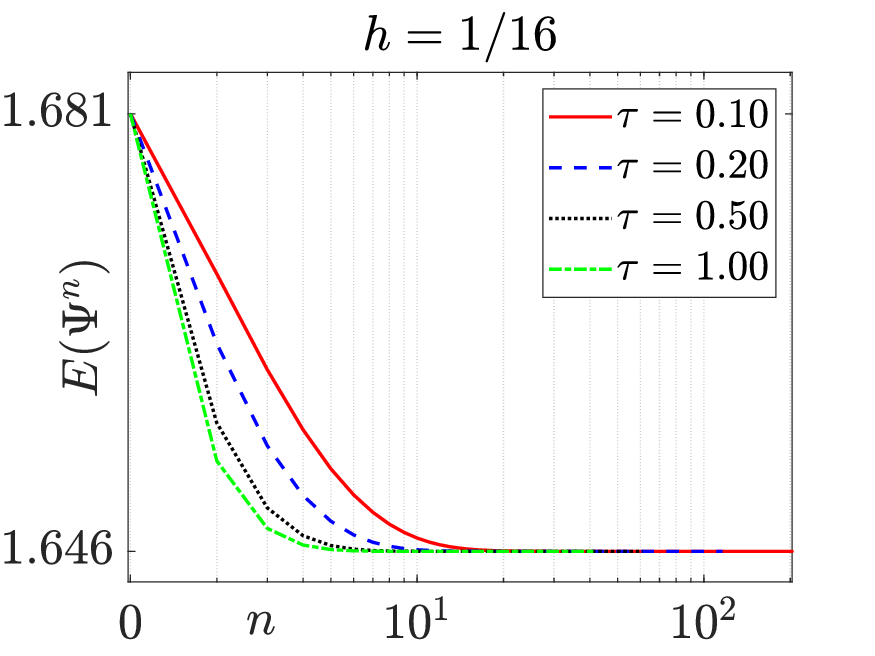}
    \end{minipage}\hfill
    \begin{minipage}{0.32\linewidth}
      \centering
      \includegraphics[width=\linewidth]{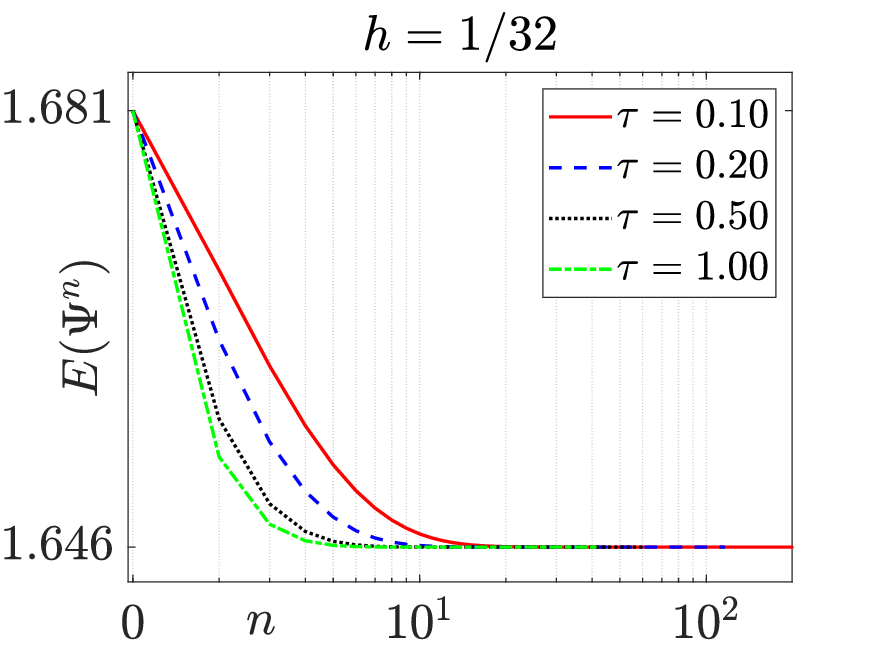}
    \end{minipage} 
  \caption{Energy evolution that decrease at each 
step for various time steps $\tau= 0.1$, $0.2$, $0.5$, $1.0$ and different mesh sizes h= $1/8$, $1/16$, $1/32$ in \textbf{Case 1} (upper) and \textbf{Case 2} (lower) of Example \ref{example: energy decrease}.}
  \label{fig:case12}
\end{figure}

Fig. \ref{fig:case12} show the evolution of the energy for different mesh sizes and various time steps in both \textbf{Case 1} and \textbf{Case 2} the energy-diminishing property of the GFSI algorithm (\ref{original GFSI}) when time step $\tau \leq \tau_0$ is truly verified. Also, the upper bound of time steps $\tau_0$ does not depends on mesh sizes $h$. The images that decrease at each step numerically verifies the correctness of Theorem \ref{THM} we proved previously.

Table \ref{tab:case1} presents the total energy of converged ground state $\Psi_g$ computed by GFSI algorithm (\ref{original GFSI}) with central differences in spatial direction for different mesh size $h$ in Example \ref{example: energy decrease}. Under the same mesh size, various time steps will converge to the same toatal energy, but iteration numbers used for GFSI algorithm to converge shall be different, the corresponding iteration numbers are also listed there. 
\begin{table}[ht]
\caption{List all total energy of the conserved ground state $E(\Psi_g)$ and numbers represent the iterations number used to converge for \textbf{Case 1} with different mesh size $h$ and various time steps $\tau$ in Example \ref{example: energy decrease}.}
\label{tab:case1}
\centering
{\begin{tabular*}{\textwidth}{@{\extracolsep{\fill}}cccccc@{}} \toprule
$E(\Psi_g)$ & $h$ & $\tau = 1.0$ & $\tau = 0.5$ & $\tau = 0.2$ & $\tau = 0.1$ \\ \midrule
 3.8818 & $1/4$ & 68 & 86 & 131 & 201 \\
 3.8821 & $1/8$ & 52 & 68 & 106 & 163 \\
 3.8822 & $1/16$ & 36 & 50 & 81 & 127 \\
 3.8822 & $1/32$ & 30 & 36 & 56 & 91 \\ 
 3.8822 & $1/64$ & 30 & 36 & 51 & 74 \\ \bottomrule
\end{tabular*}}
\end{table}

\begin{figure}[ht]
  \centering
  \includegraphics[width=\linewidth]{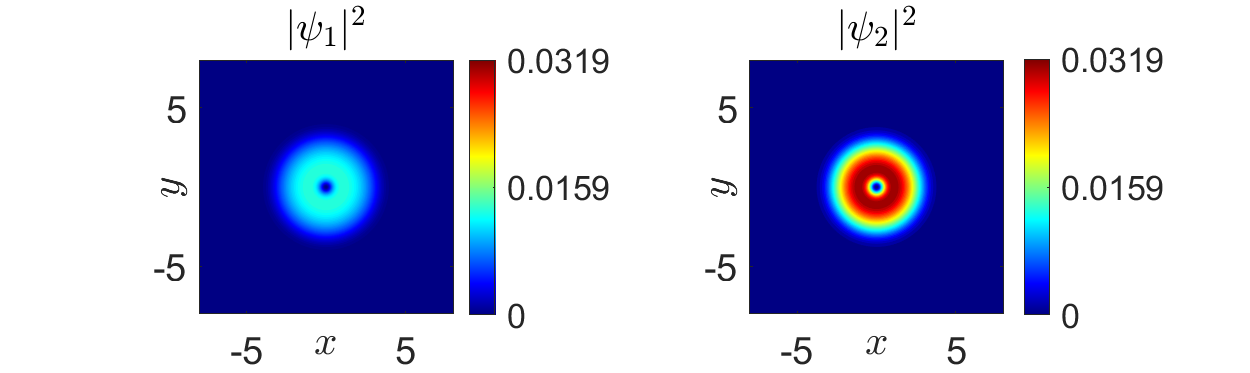}
  \vspace{0.5em}   
  \includegraphics[width=\linewidth]{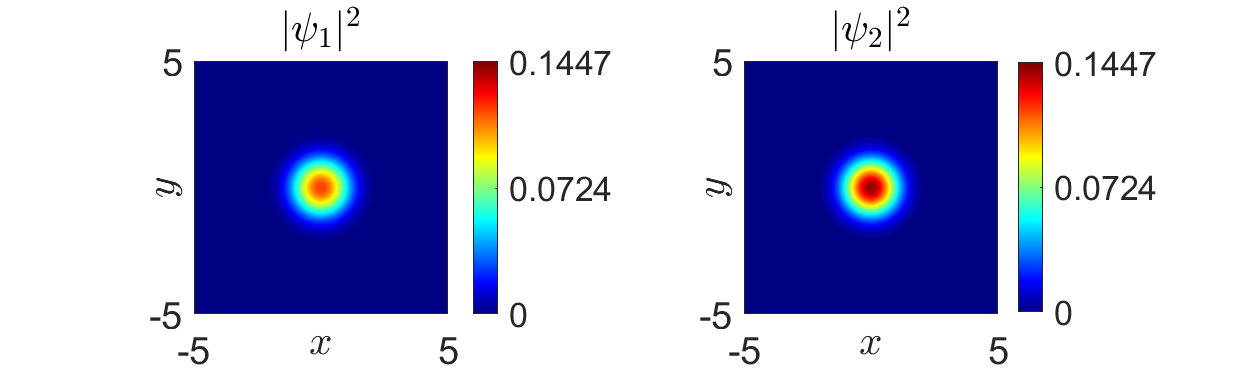}  
  \caption{Contour plots of the converged functions $|\psi_1|^2$ and $|\psi_2|^2$  in \textbf{Case 3} (upper) and \textbf{Case 4} (lower) of Example~\ref{example: energy decrease}.}
  \label{contour}
\end{figure}

In Fig. \ref{contour}, for the obtained ground state $\Psi_g = (\psi_1,\psi_2)^{\top}$, we show the contour plots of the converged functions $|\psi_1|^2$ and $|\psi_2|^2$ for following two cases.
\begin{enumerate}
  \item \textbf{Case 3}: Interaction parameters $k_{11}=500$, $k_{22}=97$, $k_{12}=53$, coupling strength $\beta = -5$, $\omega_1 = \omega_2 = 0.5$, mesh size $h = 1/16$ for $L = 8$ and time steps $\tau = 0.2$ with initial data chosen as $(\Psi_0)_1 = (\Psi_0)_2 =  (x+iy)e^{-(x^2+y^2)/2}/\sqrt{2\pi} $.
  \item \textbf{Case 4}: Interaction parameters $k_{11}=10$, $k_{22}=1.0$, $k_{12}=-0.97$, coupling strength $\beta = 5$, $\omega_1 = \omega_2 = 0.5$, mesh size $h = 1/16$ for $L = 5$ and time steps $\tau = 0.2$ with initial data chosen as $(\Psi_0)_1 = (\Psi_0)_2 =  e^{-(x^2+y^2)/2}/\sqrt{2\pi} $.
\end{enumerate}
\begin{example}\label{example: time step}
    In this numerical example, we test whether the upper bound of time steps $\tau_0$ in GFSI algorithm assuredly depends on the strength of particle interaction $k_m$. For this purpose, we let $\beta = -1$, $\omega_1 = \omega_2 = 0$, $L = 8$, mesh size $h = 1/8$ and initial data for GFSI $(\Psi_0)_1 = (\Psi_0)_2 = e^{-(x^2+y^2)/2}/\sqrt{2\pi}$. More importantly, choose the repulsive parameter $k_m$ ranging from $1000$ to $15000$ with $k_{11} = k_{22} = 5/4 k_{12}$, and time step $\tau = 1,0.8,0.6,0.4,0.2$.
\end{example}

Table \ref{table: example 2} lists the total energy of the converged ground state $\Psi_g$ for various time steps and numbers represent the iteration numbers while the underlined implies the sequence at this time step generated by full-discretized GFSI algorithm does not conform to the energy-diminishing property. We can tell from this numerical observation that appropriate upper bound of time steps decreases as $k_m$ increases, which is consistent with our result.
\begin{table}[ht]

\caption{The underlined number means the sequence at this time step does not conform to the energy-diminishing property. Also list total energy and different time step in Example \ref{example: time step}.}
\label{table: example 2}
\centering
{\begin{tabular*}{\textwidth}
{@{\extracolsep{\fill}}ccccccc@{}} \toprule
$E(\Psi_g)$ & $k_{11}$ & $\tau = 1.0$ & $\tau = 0.8$ & $\tau = 0.6$ & $\tau = 0.4$ & $\tau = 0.2$ \\ \midrule
 46.5819 & 15000 & \underline{265} & \underline{241} & \underline{209} & \underline{166} & 105 \\
 36.6655 & 10000 & \underline{202} & \underline{184} & \underline{160} & 127 & 124 \\
 25.3373 & 5000 & \underline{135} & \underline{123} & 108 & 110 & 122 \\ 
 22.6311 & 4000 & \underline{120} & 109 & 96 & 102 & 114 \\
 11.3646 & 1000 & 59 & 58 & 61 & 65 & 76 \\  \bottomrule
\end{tabular*}}
\end{table}

\begin{figure}[ht]
  \centering
  \begin{minipage}{0.32\linewidth}
    \centering
    \includegraphics[width=\linewidth]{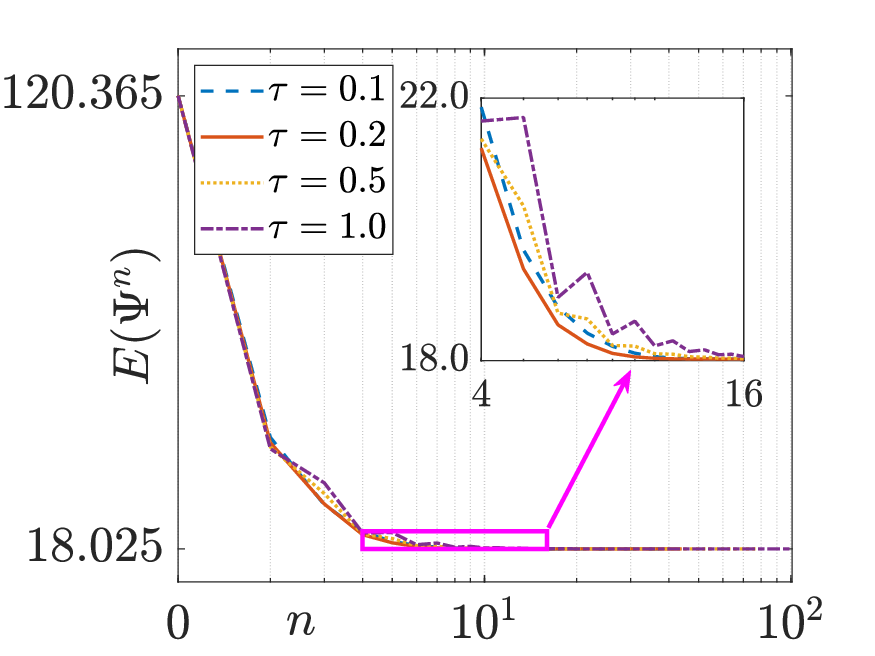}
  \end{minipage}\hfill
  \begin{minipage}{0.32\linewidth}
    \centering
    \includegraphics[width=\linewidth]{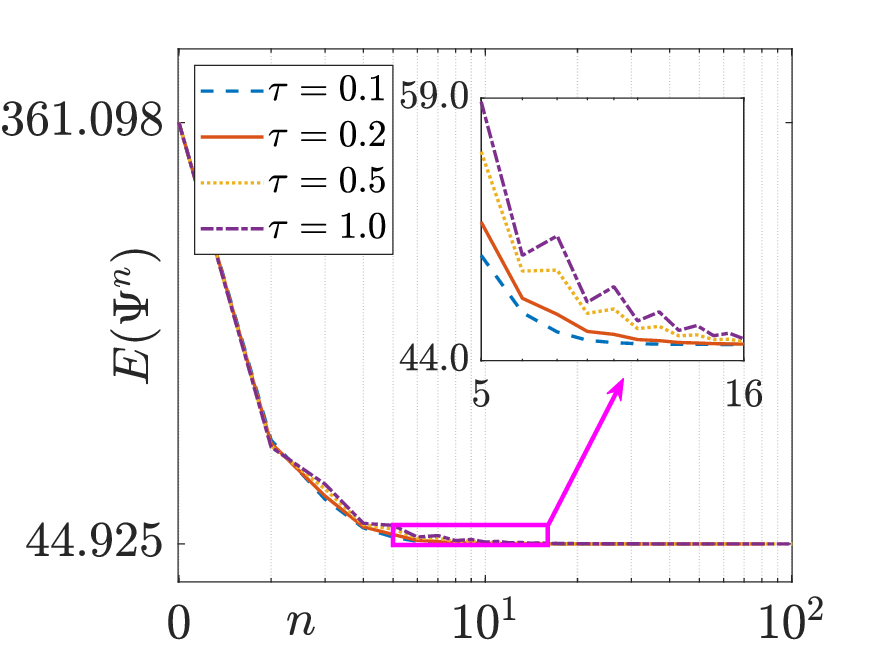}
  \end{minipage}\hfill
  \begin{minipage}{0.32\linewidth}
    \centering
    \includegraphics[width=\linewidth]{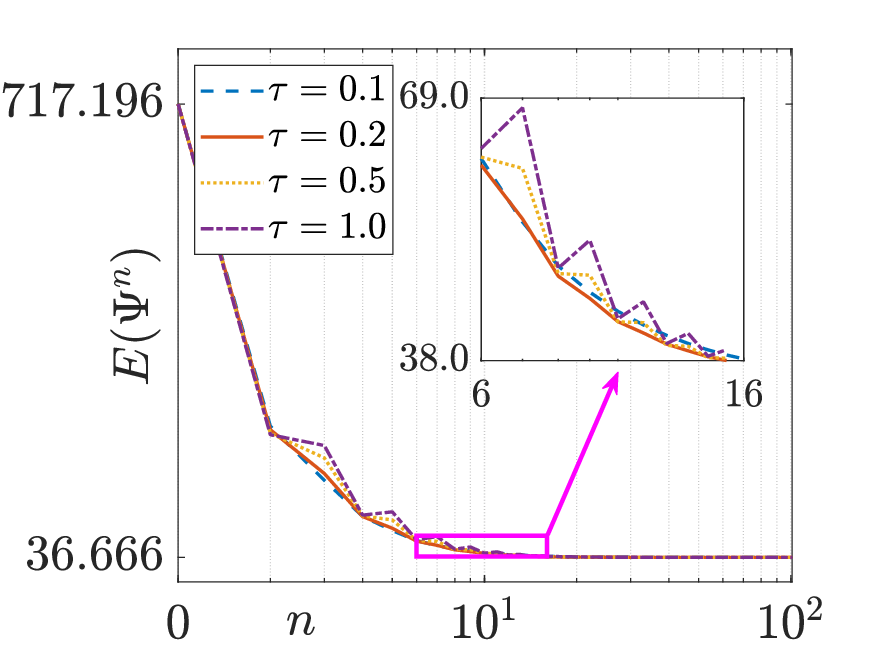}
  \end{minipage}
  \caption{The energy evolution does not satisfy energy-diminishing property in \textbf{Case 5}, \textbf{Case 6} and \textbf{Case 7} (from left to right) for mesh size $h = 1/8$ and various time steps $\tau$.}
  \label{figures case3}
\end{figure}

With initial value chosen as $(\Psi_0)_1 = (\Psi_0)_2 = e^{-(x^2+y^2)/2}/\sqrt{2\pi}$, rotation frequency $\omega_1 = \omega_2 = 0$ and mesh size $h = 1/8$, Fig \ref{figures case3} shows the evolution of total energy in following three cases.
\begin{enumerate}
    \item \textbf{Case 5}: Interaction parameters $k_{11}: k_{22}: k_{12} = 2000: 2000: 1000$, coupling strength $\beta = -5$, $L = 4$ for various time steps $\tau = 1, 0.5, 0.2, 0.1$.
    \item \textbf{Case 6}: Interaction parameters $k_{11}: k_{22}: k_{12} = 5000: 5000: 4000$, coupling strength $\beta = 1$, $L = 4$ for various time steps $\tau = 1, 0.5, 0.2, 0.1$.
    \item \textbf{Case 7}: Interaction parameters $k_{11}: k_{22}: k_{12} = 10000: 10000: 8000$, coupling strength $\beta = -1$, $L = 8$ for various time steps $\tau = 1, 0.5, 0.2, 0.1$.
\end{enumerate}
From the enlarged ares of the image, it can be clearly seen that in these three cases the energy does not satisfy energy-diminishing property.

\section{Conclusion}

In this paper, we present a rigorous analysis of the gradient flow with semi-implicit discretization (GFSI) for computing the ground states of two-component BECs and prove its energy dissipation and global convergence to the stationary states when the interaction matrix is symmetric positive definite and has non-negative entries. GFSI is one of the most widely used methods for computing the ground states of BECs, and many numerical experiments have verified its energy-dissipative and convergent behavior. This work provides the first theoretical proof of these properties in multicomponent BECs.

\appendix

\section{Proof of Lemma \ref{existence of ground state}}
\label{proof of EXT}
\begin{proof}
    According to (\ref{original total energy}) and the coercivity of $\mathcal{H}_{\mathbf{0}}$ (\ref{H0}) we obtain that
$$
E(\Psi)=\langle \mathcal{H}_{\mathbf{0}} \Psi,\Psi \rangle + \sum_{1,2}\int_{\Omega} \frac{1}{2}|\rho_i(\Psi)||\psi_i|^2\mathrm{d}\mathbf{x} \geq C_0\|\Psi\|_{H^1_0}^2 \geq 0.
$$
which means energy functional $E$ is bounded below, then there exists a minimizing sequence $\{\Psi^m\}_{m \in \mathbb{N}} \subset \mathcal{M}$. As such sequence is bounded in $[H_0^1(\Omega)]^2$, thare exists a $\hat{\Psi}$ in $H$ such that
$$ \psi_i^{m} \rightharpoonup \hat{\Psi}_i \quad i=1,2 \quad \text{weakly in} \quad H_0^1(\Omega).$$
The compact embedding $H_0^1(\Omega) \hookrightarrow L^2(\Omega)$ implies:
$$
\psi_i^{m} \to \hat{\Psi}_i \quad i=1,2 \quad \text{strongly in} \quad L^2(\Omega).
$$
Then $ \|\hat{\Psi}\|_{L^2}=\lim_{m \rightarrow \infty} \|\Psi^m\|_{L^2} = 1$ further indicates $\hat{\Psi} \in \mathcal{M}$.

Discussion above yields $\mathcal{M}$ is a bounded weakly closed subset, associated with the weak lower semi-continuity of $E$ in $[H_0^1(\Omega)]^2$ immediately shows the existence of $ \arg\min_{\Psi \in \mathcal{M}} E(\Psi)$.

\end{proof}

\section*{Acknowledgment}

The authors were partially supported by the National Key R\&D Program of China (No. 2024YFA1012803) and the Natural Science Foundation of Sichuan Province (Grant No.2024NSFSC0438).


\begin{thebibliography}{00}

\bibitem{ref_Abo}
J.~R.~Abo-Shaeer, C.~Raman, J.~M.~Vogels, and W.~Ketterle,
Observation of vortex lattices in Bose-Einstein condensates,
{\it Science}, {\bf 292} (2001), 476-489.

\bibitem{ref_S.K.}
S.~K.~Adhikari and P.~Muruganandam,
Mean-field model for the interference of matter-waves from a three-dimensional optical trap,
{\it Phys. Lett. A}, {\bf 310} (2003), 229-235.

\bibitem{Alt}
R.~Altmann, D.~Peterseim, and T.~Stykel,
Riemannian Newton methods for energy minimization problems of Kohn-Sham type,
{\it J. Sci. Comput.} {\bf 101} (2024), article 6.

\bibitem{ref_ROM}
R.~Altmann, M.~Hermann, D.~Peterseim, and T.~Stykel, Riemannian optimization methods for ground states of multicomponent Bose-Einstein condensates,
{\it arXiv:2411.09617}.

\bibitem{ref_And}
M.~H.~Anderson, J.~R.~Ensher, M.~R.~Matthews, C.~E.~Wieman, and E.~A.~Cornell,
Observation of Bose-Einstein condensation in a dilute atomic vapor,
{\it Science}, {\bf 269} (1995), 198-201.

\bibitem{ref_Antoine}
X.~Antoine and R.~Duboscq,
Robust and efficient preconditioned Krylov spectral solvers for computing the ground states of fast rotating and strongly interacting Bose-Einstein condensates,
{\it J. Comput. Phys.}, {\bf 258} (2014), 509-523.

\bibitem{ref_Q.tang}
X.~Antoine, A.~Levitt, and Q.~Tang,
Efficient spectral computation of the stationary states of rotating Bose-Einstein condensates by preconditioned nonlinear conjugate gradient methods,
{\it J. Comput. Phys.}, {\bf 343} (2017), 92-109.

\bibitem{ref_multi}
W.~Bao,
Ground states and dynamics of multicomponent Bose-Einstein condensates,
{\it Multiscale Model. Simul.}, {\bf 2} (2004), 210-236.

\bibitem{ref_bao_cai}
W.~Bao and Y.~Cai,
Ground states of two-component Bose-Einstein condensates with an internal atomic Josephson junction,
{\it East Asia J. Appl. Math.}, {\bf 1} (2010), 49-81.

\bibitem{ref_Kinet} W. Bao and Y. Cai, Mathematical theory and numerical methods for Bose-Einstein condensation, {\it Kinet. Relat. Models}, {\bf 6} (2013), 1-135.

\bibitem{ref_DNGF}
W.~Bao and Q.~Du,
Computing the ground state solution of Bose-Einstein condensates by a normalized gradient flow,
{\it SIAM J. Sci. Comput.}, {\bf 25} (2004), 1674-1697.

\bibitem{ref_Cal}
M.~Caliari, A.~Ostermann, S.~Rainer, and M.~Thalhammer,
A minimisation approach for computing the ground state of Gross-Pitaevskii systems,
{\it J. Comput. Phys.}, {\bf 228} (2009), 349-360.

\bibitem{ref_Can}
E.~Canc\'{e}s, R.~Chakir, and Y.~Maday,
Numerical analysis of nonlinear eigenvalue problems,
{\it J. Sci. Comput.}, {\bf 45} (2010), 90-117.

\bibitem{ref_Z.Chen}
Z.~Chen, J.~Lu, Y.~Lu, and X.~Zhang,
On the convergence of Sobolev gradient flow for the Gross-Pitaevskii eigenvalue problem,
{\it SIAM J. Numer. Anal.}, {\bf 62} (2024), 667-691.

\bibitem{ref_I.D.}
I.~Danaila and P.~Kazemi,
A new Sobolev gradient method for direct minimization of the Gross-Pitaevskii energy with rotation,
{\it SIAM J. Sci. Comput.}, {\bf 32} (2010), 2447-2467.

\bibitem{ref_A.1}
A.~Einstein,
Quantentheorie des einatomigen idealen gases,
{\it Sitzungsberichte der Preussischen Akademie der Wissenschaften}, {\bf 22} (1924), 261-267.

\bibitem{ref_A.2}
A.~Einstein,
Quantentheorie des einatomigen idealen gases, zweite Abhandlung,
{\it Sitzungsberichte der Preussischen Akademie der Wissenschaften}, {\bf 1} (1925), 3-14.

\bibitem{ref_Faou}
E.~Faou and T.~J\'ez\'equel,
Convergence of a normalized gradient algorithm for computing ground states,
{\it IMA J. Numer. Anal.}, {\bf 38} (2017), 360-376.

\bibitem{ref_Zixu}
Z.~Feng, Q.~Tang, and C.~Wang,
On the discrete normalized gradient flow for computing ground states of rotating Bose-Einstein condensates: energy dissipation and global convergence,
{\it preprint}.

\bibitem{ref_Henning}
P.~Henning,
The dependency of spectral gaps on the convergence of the inverse iteration for a nonlinear eigenvector problem,
{\it Math. Models Methods Appl. Sci.}, {\bf 33} (2023), 1517-1544.

\bibitem{ref_sobol}
P.~Henning and D.~Peterseim,
Sobolev gradient flow for the Gross-Pitaevskii eigenvalue problem: global convergence and computational efficiency,
{\it SIAM J. Numer. Anal.}, {\bf 58} (2020), 1744-1772.

\bibitem{ref_arXiv}
P.~Henning and M.~Yadav,
Convergence of a Riemannian gradient method for the Gross-Pitaevskii energy functional in a rotating frame,
{\it arXiv:2406.03885}.

\bibitem{ref_Huang}
P.~Huang and Q.~Yang,
Newton-based alternating methods for the ground state of a class of multicomponent Bose-Einstein condensates,
{\it SIAM J. Optim.}, {\bf 34} (2024), 3136-3162.

\bibitem{ref_Kaz}
P.~Kazemi and M.~Eckart,
Minimizing the Gross-Pitaevskii energy functional with the Sobolev gradient-analytical and numerical results,
{\it Int. J. Comput. Methods}, {\bf 7} (2010), 453-475.

\bibitem{ref-Kre}
D.~Kressner, M.~Steinlechner, and B.~Vandereycken,
Preconditioned low-rank Riemannian optimization for linear systems with tensor product structure,
{\it SIAM J. Sci. Comput.}, {\bf 38} (2016), A2018-A2044.

\bibitem{ref_Lin}
T.~C.~Lin and J.~Wei,
Spikes in two-component systems of nonlinear Schr\"{o}dinger equations with trapping potentials,
{\it J. Differential Equations}, {\bf 229} (2006), 538-569.

\bibitem{ref_W.Liu.}
W.~Liu and Y.~Cai,
Normalized gradient flow with Lagrange multiplier for computing ground states of Bose-Einstein condensates,
{\it SIAM J. Sci. Comput.}, {\bf 43} (2021), B219-B242.

\bibitem{ref_low_temp}
L.~P.~Pitaevskii and S.~Stringari,
Bose–Einstein Condensation,
{\it Clarendon Press, Oxford}, (2003).


\bibitem{ref_mish}
B.~Mishra and R.~Sepulchre,
Riemannian preconditioning,
{\it SIAM J. Optim.}, {\bf 26} (2016), 635-660.

\bibitem{ref_2-BECS}
C.~J.~Myatt, E.~A.~Burt, R.~W.~Ghrist, E.~A.~Cornell, and C.~E.~Wieman,
Production of two overlapping Bose-Einstein condensates by sympathetic cooling,
{\it Phys. Rev. Lett.}, {\bf 78} (1997), 586–589.

\bibitem{ref_sch}
R.~Schneider, T.~Rohwedder, A.~Neelov, and J.~Blauert,
Direct minimization for calculating invariant subspaces in density functional computations of the electronic structure,
{\it J. Comput. Math.}, {\bf 27} (2009), 360-387.


\end{thebibliography}
\end{document}